\newcommand{\excise}[1]{}
\newtheorem{thm}{Theorem}[section]
\newtheorem{lemma}[thm]{Lemma}
\newtheorem{prop}[thm]{Proposition}
\theoremstyle{definition}
\newtheorem{example}[thm]{Example}
\numberwithin{equation}{section}
\newcommand{\ring}[1]{\ensuremath{\mathbb{#1}}}
\renewcommand\>{\rangle}
\newcommand\<{\langle}
\newcommand\QQ{\ring{Q}}
\newcommand\RR{\ring{R}}
\newcommand\ZZ{\ring{Z}}
\newcommand\ww{{\mathbf w}}
\newcommand\al{{\boldsymbol\alpha}}
\newcommand\be{{\boldsymbol\beta}}
\newcommand\ee{{\mathbf e}}
\newcommand\vv{{\mathbf v}}
\newcommand\LL{{\mathbb L }}
\newcommand\hh{{\mathbf h}}
\newcommand\rr{{\mathbf r}}
\newcommand\uu{{\mathbf u}}
\def\abs#1{\left\vert{#1}\right\vert}
\def\NPP#1{{\mathcal{O}_{#1}^{-}}}
\def\PPN#1{{\mathcal{O}_{#1}^{+}}}
\def\PNP#1{{\mathcal{O}_{#1}^{\pm}}}
\def\PPNhat#1{{\widehat{\mathcal{O}}_{#1}^{+}}}
\def\PNPhat#1{{\widehat{\mathcal{O}}_{#1}^{\pm}}}
\def\SSplus#1{{\mathcal{S}}_{#1}^+}
\def\SSminus#1{{\mathcal{S}}_{#1}^-}
\def\SSpm#1{{\mathcal S_{#1}^{\pm}}}
\begin{document}

\mbox{}
\title[Graver bases of shifted numerical semigroups with 3 generators]{Graver bases of shifted numerical semigroups \\ with 3 generators}

\author[Howard]{James Howard}
\address{Mathematics Department\\San Diego State University\\San Diego, CA 92182}
\email{jhoward8987@sdsu.edu}

\author[O'Neill]{Christopher O'Neill}
\address{Mathematics Department\\San Diego State University\\San Diego, CA 92182}
\email{cdoneill@sdsu.edu}

\date{\today}

\begin{abstract}
A numerical semigroup $M$ is a subset of the non-negative integers that is closed under addition. A factorization of $n \in M$ is an expression of $n$ as a sum of generators of $M$, and the Graver basis of $M$ is a collection $Gr(M_t)$ of trades between the generators of $M$ that allows for efficient movement between factorizations.  Given positive integers $r_1, \ldots, r_k$, consider the family $M_t = \<t + r_1, \ldots, t + r_k\>$ of ``shifted'' numerical semigroups whose generators are obtained by translating $r_1, \ldots, r_k$ by an integer parameter $t$.  In this paper, we characterize the Graver basis $Gr(M_t)$ of $M_t$ for sufficiently large $t$ in the case $k = 3$, in the form of a recursive construction of $Gr(M_t)$ from that of smaller values of $t$.  As a consequence of our result, the number of trades in $Gr(M_t)$, when viewed as a function of $t$, is eventually quasilinear.  We~also obtain a sharp lower bound on the start of quasilinear behavior.  
\end{abstract}

\maketitle



\section{Introduction}
\label{sec:intro}

A \emph{numerical semigroup} is a cofinite subset $M \subseteq \ZZ_{\ge 0}$ that is closed under addition and contains $0$ (see~\cite{numerical} for a thorough introduction).  We often specify a numerical semigroup using generators $n_1 < \cdots < n_k$,~i.e.,
\[
M = \<n_1, \ldots, n_k\> = \{z_1 n_1 + \cdots + z_k n_k : z_i \in \ZZ_{\ge 0}\}.
\]
A \emph{factorization} of an element $n \in M$ is an expression 
$$n = z_1 n_1 + \cdots + z_k n_k$$
of $n$ as a sum of generators of $S$, which we often encode as a $k$-tuple $(z_1, \ldots, z_k)$.  
One of the primary ways the factorization structure of $M$ is studied is via \emph{trades} between the generators of $M$, encoded as $k$-tuples $(z_1, \ldots, z_k) \in \ZZ^k$ satisfying 
$$z_1 n_1 + \cdots + z_k n_k = 0.$$

Numerical semigroups arise in countless areas of mathematics~\cite{numericalappl}.  In the present work, we focus on their appearance in discrete optimization in the form of \emph{knapsack problems}, which, in the language above, seeks the factorization of a given element $n \in M$ that optimizes some linear functional~\cite{knapsacksurvey}.  One of the primary computational tools for solving linear integer programs of this form is the \emph{Graver basis} of $M$, which is the collection $Gr(M) \subseteq \ZZ^k$ of trades of $M$ that cannot be written as a sum of nonzero trades with identical sign pattern 
(see~\cite{alggeodiscopt} for a thorough treatment).  
If one computes the Graver basis for $M$, then any knapsack problem for $M$ can be solved quickly by locating some solution and then greedily applying trades in the Graver basis that yield until the optimum is reached.  Unfortunately, Graver bases are generally quite large, making them infeasible for large systems~\cite{nfold}.  

Numerical semigroups with 3 generators are a common first-case to consider, and much more is known for this family than in general~\cite{lengthdist1,deltadim3nonsym,deltadim3sym}, due in part to their restricted trade structure~\cite{factordim3} that fails to extend to numerical semigroups with 4 generators or more.  Despite this, Graver bases are one of the few properties numerical semigroups that lack a characterization even in the case of 3-generators.  

One particularly well-studied family of numerical semigroups are those of the form $\<t, t + r_1, \ldots, t + r_k\>$ for $t$ large and $r_1, \ldots, r_k \in \ZZ_{\ge 1}$ fixed.  Many attributes of numerical semigroups that are unwieldy in general admit concise formulas and descriptions for semigroups in this family, including Frobenius number and genus~\cite{shiftyapery}, delta set~\cite{shiftydelta}, catenary degree~\cite{shiftyminpres}, and Betti numbers~\cite{vu14}, with some additional results for numerical semigroups with 3 generators~\cite{shiftedtangentcone,shifted3gen}.  
The goal of this manuscript is to add Graver bases to this list.  

Let us briefly examine a related result.  A Markov basis (also known as a minimal presentation) of a numerical semigroup $S$ is a collection $R$ of trades such that for any $n \in S$, any two factorizations of $n$ can be connected by a sequence of factorizations of $n$ in which successive factorizations differ by a trade in $R$~\cite{markovbook,algmarkov}.  Markov bases can be used to solve knapsack problems, and often contain far less trades than the Graver basis, but their reduced size comes at a cost of efficiency.  It was proven in~\cite{shiftyminpres} that there is a bijection between the Markov bases of $M_t$ and those of $M_{t+r_k}$ for $t$ sufficiently large.  In particular, Markov basis cardinality of $M_t$, when viewed as a function of $t$, is eventually periodic.  Note that the cardinality of $Gr(M_t)$, on the other hand, is unbounded as $t$ grows whenever $M_t$ has at least 3 minimal generators.  

In this paper, we examine the Graver bases of shifted families of numerical semigroups with 3 generators, which in general take the form
$$M_t = \<t - da, t, t + db\>$$
with $a, b, d \in \ZZ_{\ge 1}$ and $\gcd(a,b) = 1$.  
Our main result is the following theorem, obtained as a consequence of a geometric characterization of the Graver basis of $M_t$ for large $t$.  
Note the difference in asymptotic growth rate as compared to the cardinatlity of the Markov basis discussed above.  

\begin{thm}\label{t:graversize}
	For $t \gg 0$, $|Gr(M_{t})|$ coincides with a quasilinear function of $t$ with period $\rho = dab(a+b)$ and leading coefficient $2/ab$.  In particular,
	$$|Gr(M_{t + \rho})| = |Gr(M_t)| + 2d(a+b)$$
	for all $t  \gg 0$.
\end{thm}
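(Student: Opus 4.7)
The plan is to deduce the quasilinear growth of $|Gr(M_t)|$ directly from the recursive/geometric characterization of $Gr(M_t)$ that forms the main technical content of the paper. A trade $(z_1, z_2, z_3) \in \ZZ^3$ for $M_t = \langle t - da,\, t,\, t + db\rangle$ satisfies the single linear relation $z_1(t - da) + z_2 t + z_3(t + db) = 0$, so the lattice of trades has rank $2$.  Leveraging the restricted sign structure of trades in a $3$-generated semigroup (cf.~\cite{factordim3}), one can encode the Graver-primitive trades as lattice points of a piecewise-linear region $R_t \subseteq \RR^2$ whose boundaries depend on $t$.  The recursive construction should then be interpreted geometrically as expressing $R_{t + \rho}$ as $R_t$ together with a \emph{collar} whose shape is translation-invariant in $t$ once $t$ is sufficiently large.

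Granting that description, the theorem reduces to a bounded lattice-point count.  I would verify that the recursion yields an injection $Gr(M_t) \hookrightarrow Gr(M_{t + \rho})$ whose complement corresponds exactly to the lattice points of the collar, and then count those lattice points directly.  The period $\rho = dab(a+b)$ should be the smallest shift under which the collar geometry closes up, and its total contribution should be $2d(a+b)$ new primitive trades, with the factor of $2$ reflecting the two possible sign patterns of primitive trades in a $3$-generated semigroup and each ``side'' contributing $d(a+b)$ per period.

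To pass from this recurrence to quasilinearity, set $g(t) = |Gr(M_t)| - (2/ab)\, t$.  The identity
\[
|Gr(M_{t + \rho})| - |Gr(M_t)| \;=\; 2d(a+b) \;=\; (2/ab)\cdot\rho
\]
then gives $g(t + \rho) = g(t)$ for all $t \gg 0$, which is precisely the statement that $|Gr(M_t)|$ is quasilinear in $t$ with period $\rho$ and leading coefficient $2/ab$.  The ``in particular'' clause of the theorem is just the recurrence that was established.

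The main obstacle, as one would expect, is certifying that the recursive injection is well-defined and that the collar count is sharp: one must rule out the possibility that a Graver trade loses its primitivity as $t$ grows, and conversely verify that each new trade in the collar is genuinely Graver-primitive rather than decomposable as a sign-compatible sum in $M_{t+\rho}$.  Because the primitivity test is sensitive to the precise relation $z_1(t-da) + z_2 t + z_3(t+db) = 0$, boundary cases (trades supported on only two of the three generators, and trades lying near the edges of $R_t$) will require separate analysis, likely keyed to the residue of $t$ modulo a divisor of $\rho$.
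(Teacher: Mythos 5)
Your write-up establishes only the easy half of the theorem and leaves the actual content unproved. The deduction at the end --- that the recurrence $|Gr(M_{t+\rho})| = |Gr(M_t)| + 2d(a+b)$ forces quasilinearity with period $\rho$ and leading coefficient $2/ab$, because $2d(a+b) = (2/ab)\rho$ --- is correct but is trivial arithmetic. Everything else is conditional: you write ``granting that description,'' ``should then be interpreted,'' ``I would verify,'' and you yourself flag as ``the main obstacle'' precisely the two assertions that constitute the theorem, namely that primitivity is preserved under the passage $t \mapsto t+\rho$ and that the count of genuinely new primitive trades per period is exactly $2d(a+b)$. Neither is certified, so there is a genuine gap: the recurrence is assumed, not proved. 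The paper's proof of these facts occupies Sections 2--5: it splits $Gr(M_t)$ into the Hilbert bases of the three sign orthants $\PNP{t}$, $\PPN{t}$, $\NPP{t}$ (and their negatives), builds explicit length-preserving lattice isomorphisms $\varphi_{i,j}(\vv) = \vv + \rho\,\ell(\vv)(\ee_i-\ee_j)/(d(r_j-r_i))$ --- whose integrality is exactly why $\rho = dab(a+b)$ works, since $d(r_j-r_i) \mid \rho$ for every pair --- proves that all irreducible trades lie within an $\hh$-neighborhood of the coordinate hyperplanes except for a segment of trades of length exactly $\pm d$ spaced by $\hh$, and then shows the maps restrict to bijections on those neighborhoods while the segment's endpoints ($\al,\be$ and $\boldsymbol{\xi},\boldsymbol{\omega}$) are pushed apart.

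Your heuristic accounting also does not match the structure that actually produces the count, which suggests the ``collar'' picture would not survive an attempt to make it precise. The factor of $2$ comes from negation (trades occur in $\pm$ pairs), not from ``two possible sign patterns'': up to negation there are three orthant types, and they contribute unequally --- the middle orthant $\PNP{t}$ has Hilbert basis of \emph{constant} cardinality, while $\PPN{t}$ and $\NPP{t}$ grow by $da$ and $db$ respectively, giving $2(da+db) = 2d(a+b)$. Moreover the new trades are not boundary lattice points of a dilating planar region; they are interior points of the dilating line segments of length-$\pm d$ trades, and certifying that every such trade is irreducible while every irreducible trade of larger length stays in a fixed $\hh$-neighborhood (Propositions~\ref{p:pnplengths}--\ref{p:pnpreducible}, Theorem~\ref{t:ppnreducible}, and their analogues) is where the explicit thresholds $B^{\pm}, B^{+}, B^{-}$ enter. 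Without carrying out this orthant-by-orthant analysis --- or some substitute for it --- the proposal does not prove the recurrence, and hence does not prove the theorem.
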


Our methods are constructive:\ we identify explicit bijections between portions of $Gr(M_t)$ and $Gr(M_{t+\rho})$ that allow one to compute the latter Graver basis directly from the former.  The bijective maps are constructed in Sections~\ref{sec:pnp} and~\ref{sec:ppn}, after setting up notation in Section~\ref{sec:prelims}, and their utility is illustrated in an example given alongside the proof of Theorem~\ref{t:graversize} in Section~\ref{sec:graverbasis}.  

Additionally, the lower bound we obtain on $t$ in the proof of Theorem~\ref{t:graversize} is sharp in all cases (see Propositions~\ref{p:hirreducible} and~\ref{p:lendexists}).  In fact, our proof examines each orthant of $\RR^3$ separately, and we obtain a sharp lower bound on $t$ within each orthant.  Note that this is unusual for eventual-quasipolynomiality results in this area; for instance, the lower bound for the results in~\cite{vu14} discussed above concerning Markov bases of shifted numerical semigroups was improved in~\cite{shifted3gen} for 3-generated numerical semigroups, and subsequently improved further in~\cite{homogeneousnumerical}, but a sharp bound remains elusive.  



\section{Preliminaries and Setup}
\label{sec:prelims}

Fix $a, b, d \in \ZZ_{\ge 0}$ with $\gcd(a,b) = 1$, and let $\rr = (-a, 0, b)$.  For each $t > da$ with $\gcd(t,d) = 1$, let
$$M_t = \<t - da, t, t + db\>$$
denote a (not necessarily minimally generated) numerical semigroup. 
The \emph{factorization homomorphism} $\pi_t:\ZZ^3 \to M_t$ is given by
\[
\pi_t(\vv) = (t - da)v_0 + tv_1 + (t + db)v_2
\]
for each $\vv = (v_0, v_1, v_2) \in \ZZ^3$.  In particular, if $\vv \in \ZZ_{\ge 0}^3$, then the element $\pi_t(\vv)$ has $\vv$ has a factorization.  
The kernel 
$$\LL_t = \ker \pi_t$$
is called the \emph{trade lattice} of $M_t$, and its elements are called \emph{trades} of $M_t$.  The coordinate sum of $\vv \in \ZZ^3$ is known as its \emph{length}, which we denote
\[\ell(\vv) = v_0 + v_1 + v_2.\]
A trade $\vv \in \LL_t$ with length $\ell(\vv) = 0$ is called \emph{homogeneous}.  The expressions
\setcounter{equation}{-1}
\begin{align}
	\label{eqn:0} \pi_t(\vv) &= (t-da)\ell(\vv) + dav_1 + d(a+b)v_2 \\ 
	\label{eqn:1} \pi_t(\vv) &= -dav_0 + t\ell(\vv) + dbv_2 \\
	\label{eqn:2} \pi_t(\vv) &= -d(a+b)v_0 -dbv_1 + (t+db)\ell(\vv),
\end{align}
each obtained using $\ell(\vv)$ to eliminate some $v_i$, occur frequently throughout the paper.  
The record the following basic facts from~\cite[Proposition~2.9]{delta} and \cite[Proposition~5]{samelengthfactorizations}, will also be useful throughout the paper.  

\begin{lemma}\label{l:basiclengthfacts}
The following hold.  
	\begin{enumerate}[(a)]
	\item 
	Any $\vv \in \LL_t$ satisfies $\ell(\vv) \in d\ZZ$.

	\item 
	Every homogeneous trade in $\LL_t$ is an integer multiple of
	\[
	\hh = (b, -(a+b), a),
	\]
	which, in particular, is independent of $t$.  
	\end{enumerate}
\end{lemma}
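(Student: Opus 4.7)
The plan is to prove both parts by directly unwinding the defining equation $\pi_t(\vv) = 0$ and exploiting the coprimality hypotheses. The key observation is that the three reformulations \eqref{eqn:0}--\eqref{eqn:2} are designed precisely to isolate $\ell(\vv)$ with a single generator-independent constant in front, so we can read off the divisibility consequences directly.

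For part (a), I would take $\vv \in \LL_t$ and apply equation \eqref{eqn:1}, which rewrites $\pi_t(\vv) = -dav_0 + t\ell(\vv) + dbv_2$. Since $\pi_t(\vv) = 0$, this rearranges to $t\ell(\vv) = d(av_0 - bv_2)$. In particular $d$ divides $t\ell(\vv)$, and since $\gcd(t,d) = 1$ by assumption, we conclude $d \mid \ell(\vv)$, as desired. (Equation \eqref{eqn:0} or \eqref{eqn:2} would give the same conclusion just as quickly.)

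For part (b), suppose $\vv \in \LL_t$ is homogeneous, so $\ell(\vv) = 0$. Substituting into the identity $t\ell(\vv) = d(av_0 - bv_2)$ from the previous step yields $av_0 = bv_2$. Since $\gcd(a,b) = 1$, we get $b \mid v_0$ and $a \mid v_2$, and writing $v_0 = bk$ forces $v_2 = ak$ for the same integer $k \in \ZZ$. The homogeneity condition $v_0 + v_1 + v_2 = 0$ then pins down $v_1 = -(a+b)k$, giving $\vv = k(b, -(a+b), a) = k\hh$. Notice that $t$ dropped out entirely, which is exactly the $t$-independence claimed.

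Neither part presents a genuine obstacle; the only subtlety worth flagging is making sure the right reformulation \eqref{eqn:0}--\eqref{eqn:2} is chosen so that $\ell(\vv)$ appears with coefficient coprime to $d$ (here $t$), after which the arithmetic is forced. The same style of argument will be used repeatedly later in the paper when analyzing trades inside individual orthants, so it is worth setting up cleanly here.
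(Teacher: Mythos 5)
Your argument is correct. Note that the paper does not prove this lemma at all --- it simply records both facts as known, citing \cite{delta} and \cite{samelengthfactorizations} --- so your direct derivation fills in exactly what the paper outsources: rewriting $\pi_t(\vv)=0$ as $t\ell(\vv)=d(av_0-bv_2)$ gives (a) from the standing hypothesis $\gcd(t,d)=1$, and setting $\ell(\vv)=0$ gives $av_0=bv_2$, whence $\gcd(a,b)=1$ forces $\vv=k\hh$. The only points worth making explicit are that you cancel $d$ (legitimate since $d\ge 1$ in this family) and that the coprimality of $t$ and $d$ is indeed part of the setup in Section~2, so nothing further is needed.
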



Vectors in the interior of an orthant of $\RR^3$ intersecting $\LL_t$ nontrivially must have exactly 2 entries with equal sign.  
We denote the intersection of $\LL_t$ with each orthant with exactly two non-negative entries by
\[
	\PPN{t} = \{\vv \in \LL_t \mid v_0, v_1 \ge 0\}, \,\,\,
	\PNP{t} = \{\vv \in \LL_t \mid v_0, v_2 \ge 0\}, \,\,\,
	\NPP{t} = \{\vv \in \LL_t \mid v_1, v_2 \ge 0\}.
\]
As we will see in Propositions~\ref{p:pnplengths} and~\ref{p:ppnlengths}, each superscript indicates the possible signs of lengths of trades therein.  We also use symmetry to our advantage:\ since the negation of any trade is also a trade, the above sets and their negations cover $\LL_t$.  

Let $C \subseteq \RR^d$ be a rational cone (i.e., $C$ equals the non-negative $\RR$-span of finitely many vectors in $\QQ^d$) that is pointed (i.e, $C$ contains no nontrivial $\RR$-linear subspaces), and let $S = C \cap \ZZ^d$.  The \emph{Hilbert basis} of $S$ is the set $\mathcal H(S)$ of minimal generators of~$S$ under addition.  It is known that $\mathcal H(S)$ is finite.  
The \emph{Graver basis} of the numerical semigroup $M_t$ is the set $Gr(M_t) \subseteq \LL_t$ of trades that cannot be written as a sum of nonzero trades in the same orthant.  In other words,
$$
Gr(M_t) = 
\!\!\!\! \bigcup_{s \in \{+,\pm,-\}}\!\!\!\! \mathcal H(\mathcal O_t^s) \cup \mathcal H(-\mathcal O_t^s)
$$

As we look towards the proof of Theorem~\ref{t:graversize}, namely that $|Gr(M_t)|$ coincides with a quasilinear function of $t$ for sufficiently large $t$, we denote by
$$
\rho = dab(a + b)
\qquad \text{and} \qquad
B = \max\{B^+, B^\pm, B^-\}
$$
the \emph{period} of the quasilinear function and the lower bound on $t$, respectively, where
\[
	B^+ = (b-1)(a+b) - b(d+1),
	\quad
	B^{\pm} = dab,
	\quad \text{and} \quad
	B^- = (a-1)(a+b) - a(d-1)
\]
denote the lower bounds on $t$ we will obtain for orthant-specific results.  

To prove Theorem~\ref{t:graversize}, we examine each of the orthants $\PPN{t}$, $\PNP{t}$, and $\NPP{t}$, in turn, doing the following for $t > B$:\ 
\begin{enumerate}[(i)]
\item 
we establish a bijection between the subsets of $Gr(M_t)$ and $Gr(M_{t+\rho})$ that lie within one $\hh$-neighborhood of the coordinate hyperplanes; and

\item 
we argue that no trades in $\mathcal H(\PNP{t})$ lie outside of these $\hh$-neighborhoods, and that in $\mathcal H(\PPN{t})$ and $\mathcal H(\NPP{t})$, all remaining trades lie on a line segment with fixed lengths $d$ and $-d$, respectively.  

\end{enumerate}

As an example, let $a = 2$, $b = 3$, and $d = 1$.  Figure~\ref{fig:orthants} depicts a projection into $\RR^2$ of the trade lattice of $M_{49} = \<47,49,52\>$.  Points indicate trades in $Gr(M_{49})$, and the thick dividing lines between the 6 regions each demarcate the intersection with a coordinate hyperplane.  Thin blue strips near these dividing lines indicate the $\hh$-neighborhoods around the coordinate hyperplanes. 


The maps defined in the following theorem will comprise the bijections mentioned in item~(i) above.  We record some basic facts before beginning our examination of the central orthant, $\PNP{t}$, in the next section.  
Here, we denote the standard basis vectors
$$
\ee_0 = (1,0,0), 
\qquad
\ee_1 = (0,1,0),
\qquad \text{and} \qquad
\ee_2 = (0,0,1).
$$

\begin{figure}[t]
\begin{center}
\includegraphics[height=3.5in]{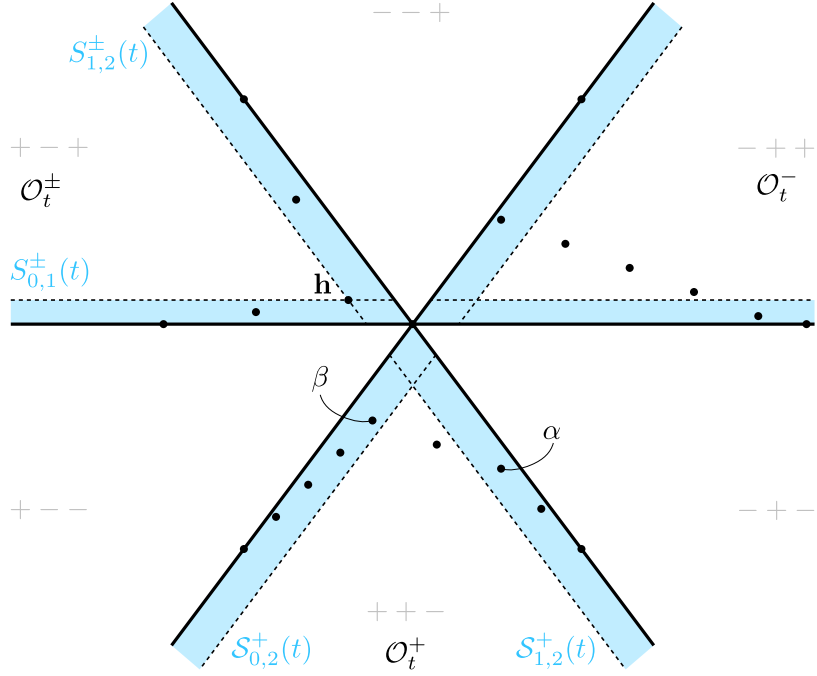}
\end{center}
\caption{The trades in the Graver basis of $M_t = \<47,49,52\>$.  }
\label{fig:orthants}
\end{figure}

\begin{thm}\label{t:big_phi_properties}
	For distinct $i, j \in \{0,1,2\}$, 
	\[
	\varphi_{i,j}:\LL_t \to \LL_{t+\rho}
	\qquad \text{given by} \qquad
	\vv \longmapsto \vv + \frac{\rho\ell(\vv)(\ee_i - \ee_j)}{d(r_j - r_i)}
	\]
	is a well-defined, bijective linear map that preserves length, that is, 
	$\ell(\vv) = \ell(\varphi_{i,j}(\vv))$.  
\end{thm}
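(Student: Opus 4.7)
The plan is to verify the four claims—integer-valuedness of $\varphi_{i,j}(\vv)$, membership of the image in $\LL_{t+\rho}$, length preservation, and bijectivity—by direct computation. The single guiding observation is the identity
\[
\pi_{t+\rho}(\vv) \;=\; \pi_t(\vv) + \rho\,\ell(\vv),
\]
obtained immediately from the definition of $\pi_t$ by factoring the $\rho$-shifts out of each generator. Thus for $\vv \in \LL_t$ we have $\pi_{t+\rho}(\vv) = \rho\,\ell(\vv)$, and the correction term $\tfrac{\rho\,\ell(\vv)}{d(r_j-r_i)}(\ee_i-\ee_j)$ in $\varphi_{i,j}$ is tailored precisely to cancel this residual. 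This reduces the theorem to a few short, mechanical checks.

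For integer-valuedness, Lemma~\ref{l:basiclengthfacts}(a) gives $\ell(\vv) = dm$ for some $m \in \ZZ$, so the correction coefficient simplifies to $\tfrac{ab(a+b)\,d\,m}{r_j - r_i}$. In all six cases $r_j - r_i \in \{\pm a, \pm b, \pm(a+b)\}$, and each of these divides $ab(a+b)$, so the coefficient is always an integer. For membership in $\LL_{t+\rho}$, I compute
\[
\pi_{t+\rho}(\ee_i - \ee_j) \;=\; (t + \rho + d r_i) - (t + \rho + d r_j) \;=\; d(r_i - r_j),
\]
so applying $\pi_{t+\rho}$ to $\varphi_{i,j}(\vv)$ yields
\[
\rho\,\ell(\vv) + \frac{\rho\,\ell(\vv)}{d(r_j - r_i)} \cdot d(r_i - r_j) \;=\; \rho\,\ell(\vv) - \rho\,\ell(\vv) \;=\; 0.
\]
This is the cancellation the formula was designed to produce.

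Length preservation is immediate, since $\ell(\ee_i - \ee_j) = 0$ implies the correction contributes nothing to $\ell$; linearity is obvious from the linearity of $\vv \mapsto \ell(\vv)(\ee_i - \ee_j)$. For bijectivity, I would exhibit an explicit two-sided inverse: define $\psi_{i,j}:\LL_{t+\rho} \to \LL_t$ by $\uu \mapsto \uu - \tfrac{\rho\,\ell(\uu)}{d(r_j-r_i)}(\ee_i-\ee_j)$. The same two verifications (integrality via Lemma~\ref{l:basiclengthfacts}(a) applied in $\LL_{t+\rho}$, and the $\pi_t$-computation, now cancelling $-\rho\,\ell(\uu)$ since $\pi_t(\uu) = \pi_{t+\rho}(\uu) - \rho\,\ell(\uu)$) show $\psi_{i,j}$ is well defined. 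Then, because both maps preserve length, composing $\psi_{i,j}\circ\varphi_{i,j}$ and $\varphi_{i,j}\circ\psi_{i,j}$ lets the correction terms cancel and produces the identity.

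There is no single hard step here; the calculation is fully mechanical. The one thing worth flagging is that $\rr = (-a,0,b)$ rather than $(-da,0,db)$, so that $\pi_{t+\rho}(\ee_i - \ee_j)$ carries an extra factor of $d$, which is exactly what the $d$ in the denominator of the correction term is there to absorb. Getting that matching right is the only place where a sign or scalar error could easily creep in.
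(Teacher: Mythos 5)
Your proposal is correct and follows essentially the same route as the paper: both rely on the identity $\pi_{t+\rho}(\vv) = \pi_t(\vv) + \rho\,\ell(\vv)$, both verify integrality by checking that $d(r_j - r_i)$ divides $\rho$, and both exhibit the same explicit inverse to establish bijectivity. Your invocation of Lemma~\ref{l:basiclengthfacts}(a) in the integrality step is harmless but unnecessary, since the divisibility $(r_j - r_i) \mid ab(a+b)$ already makes $\rho/\bigl(d(r_j-r_i)\bigr)$ an integer without appealing to $d \mid \ell(\vv)$.
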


\begin{proof}
	Linearity is immediate since $\varphi_{i,j}$ is a linear combination of linear maps, and length preservation follows from the linearity of $\ell$ and the fact that $\ell(\ee_i - \ee_j) = 0$.
	To~ensure $\varphi_{i,j}$ is well-defined, our task is to verify $\varphi_{i,j}(\LL_t) \subseteq \LL_{t+\rho}$.  For distinct $i$ and~$j$, we claim $d(r_j - r_i) \mid \rho$. Indeed, we see that
	\[\rho = dab(a+b) = b(a+b)\left(d(r_1 - r_0)\right) = ab\left(d(r_2 - r_0)\right) = a(a+b)\left(d(r_2 - r_1)\right),\]
	ensuring that integer vectors map to integer vectors. Furthermore, 
	\begin{equation}\label{eq:pitrho}
		\pi_{t+\rho}(\vv) = (t-da)v_0 + tv_1 + (t+db)v_2 + \rho(v_0+v_1+v_2)= \pi_t(\vv) + \rho\ell(\vv),
	\end{equation}
	and thus
	\begin{align*}
		\pi_{t+\rho}(\varphi_{i,j}(\vv)) - \pi_t(\vv)
		&= \pi_{t}(\varphi_{i,j}(\vv)) + \rho\ell(\varphi_{i,j}(\vv)) - \pi_t(\vv)
		= \pi_t(\varphi_{i,j}(\vv) - \vv) + \rho\ell(\vv) \\
		&= \rho\ell(\vv)\pi_t\left(\frac{\ee_i - \ee_j}{d(r_j - r_i)}\right) + \rho\ell(\vv)
		= 0
	\end{align*}
	using the fact that $\ell(\ee_i - \ee_j) = 0$.  As such, if $\vv \in \LL_t$, then $\pi_{t+\rho}(\varphi_{i,j}(\vv)) = \pi_t(\vv) = 0$.
	Injectivity is clear by linearity since $\ker \varphi_{i,j} = 0$.  This means $\varphi_{i,j}$ is full-rank as an $\RR$-linear map, whose inverse is given by 
	\[
	\vv \longmapsto \vv - \frac{\rho\ell(\vv)(\ee_i - \ee_j)}{d(r_j - r_i)}.
	\]
	By equation~\eqref{eq:pitrho} and the linearity of $\pi$, 
	\[
	\pi_t(\varphi_{i,j}^{-1}(\vv)) = \pi_t(\vv) - \rho\ell(\vv)\pi_t\left(\frac{\ee_i - \ee_j}{d(r_j - r_i)}\right) = \pi_t(\vv) + \rho\ell(\vv) = \pi_{t+\rho}(\vv),
	\]
	which implies $\varphi_{i,j}$ is a bijection.  
\end{proof}

\section{The Hilbert Basis of $\PNP{t}$}
\label{sec:pnp}

In this section, we consider the central orthant, $\PNP{t}$.  Theorem~\ref{t:pnpgraver} identifies a bijection $\mathcal H(\PNP{t}) \to \mathcal H(\PNP{t+\rho})$ for $t > B^\pm$, a depiction of which can be found in Figure~\ref{fig:pnp} for $M_t = \<47,49,52\>$.  Key in this endeavor is proving that for such $t$, the trade $\hh$ is irreducible (Proposition~\ref{p:hirreducible}), and no elements of $\mathcal H(\PNP{t})$ lie outside an $\hh$-neightborhood of the coordinate hyperplanes bordering $\PNP{t}$ (Proposition~\ref{p:pnpreducible}).

\begin{prop}\label{p:pnplengths}
	Fix $t > B^{\pm}$ and $\vv = (v_0,-v_1,v_2) \in \PNP{t}$.  
	\begin{enumerate}[a)]
		\item If $v_0 \le b$, then $\ell(\vv) \le 0$.
		\item If $v_2 \le a$, then $\ell(\vv) \ge 0$.
	\end{enumerate}
	In particular, $\vv = \hh$ if and only if $v_0 \le b$ and $v_2 \le a$.
\end{prop}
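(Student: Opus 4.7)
The plan is to convert the condition $\pi_t(\vv) = 0$, via equation~\eqref{eqn:1}, into the single identity
\[
t\,\ell(\vv) = d\bigl(av_0 - bv_2\bigr),
\]
and then use the hypothesis $t > dab = B^{\pm}$ as a large multiplier that forces $\ell(\vv)$ to have the required sign.

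For (a), I would assume $v_0 \le b$, which together with $v_2 \ge 0$ makes the right-hand side at most $dab$. If $\ell(\vv)$ were positive, then as an integer $\ell(\vv) \ge 1$, so the left-hand side would be at least $t > dab$, contradicting the identity. Part (b) is the mirror image: $v_0 \ge 0$ and $v_2 \le a$ give the right-hand side at least $-dab$, and $\ell(\vv) \le -1$ would push the left-hand side strictly below $-dab$.

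For the ``in particular'' claim, the forward direction is by inspection of $\hh = (b, -(a+b), a)$, which satisfies $v_0 = b$ and $v_2 = a$. For the reverse, combining (a) and (b) pins $\ell(\vv) = 0$, so $\vv$ is homogeneous and hence an integer multiple of $\hh$ by Lemma~\ref{l:basiclengthfacts}(b). Writing $\vv = k\hh$, the constraints $0 \le v_0 = kb \le b$ with $b \ge 1$ force $k \in \{0, 1\}$; treating the statement as implicitly excluding the trivial trade, we conclude $\vv = \hh$.

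I do not anticipate any real obstacle: the argument reduces to~\eqref{eqn:1}, the integrality of $\ell(\vv)$, and the chosen bound on $t$. The only subtle point is the status of the zero trade in the final equivalence, which should be flagged as tacitly excluded. It is also worth noting that for part (a)/(b) alone, one does not need the full divisibility $\ell(\vv) \in d\ZZ$ from Lemma~\ref{l:basiclengthfacts}(a); the weaker fact that $\ell(\vv)$ is an integer suffices once $t > dab$.
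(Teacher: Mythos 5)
Your argument is correct and essentially identical to the paper's: both reduce $\pi_t(\vv)=0$ to the identity $t\ell(\vv) = d(av_0 - bv_2)$, bound the right-hand side by $dab < t$ using the sign constraints, invoke integrality of $\ell(\vv)$, and then use Lemma~\ref{l:basiclengthfacts}(b) for the final equivalence. Your remark about tacitly excluding the zero trade is a fair (minor) point that the paper's proof also glosses over.
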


\begin{proof}
	Supposing $v_0 \le b$, we can use equation~(\ref{eqn:0}) to obtain
	\[t\ell(\vv) 
	= d(av_0 - bv_2) 
	< dav_0 
	\le dab 
	< t,\]
	from which $t$ can be cancelled to obtain $\ell(\vv) < 1$.  Integrality implies $\ell(\vv)\le 0$.
	Similarly, assuming $v_2 \le a$, we see
	\[t\ell(\vv) = d(av_0 - bv_2) > -dbv_2 \ge -dab >  -t.\]
	Cancelling $t$, integrality again yields the desired conclusion.  The final claim now follows from Lemma~\ref{l:basiclengthfacts}, as $\hh$ is the only homogeneous trade in $\PNP{t}$ whose first and third coordinates satsify the hypotheses of both parts.  
\end{proof}

\begin{prop}\label{p:hirreducible}
	If $t > B^{\pm}$, then $\hh$ is irreducible in $\PNP{t}$, and if $t = B^\pm$, then $\hh$ is reducible in $\PNP{t}$.  In particular, $B^\pm$ is the optimal lower bound.
\end{prop}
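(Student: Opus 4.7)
For the first half (irreducibility when $t > B^\pm$), the plan is to suppose for contradiction that $\hh = \vv + \ww$ with $\vv, \ww \in \PNP{t}$ both nonzero. Parameterizing $\vv = (v_0, -v_1, v_2)$ and $\ww = (w_0, -w_1, w_2)$ with $v_0, v_2, w_0, w_2 \ge 0$, the sum forces $v_0 + w_0 = b$ and $v_2 + w_2 = a$, so each summand has first coordinate at most $b$ and third coordinate at most $a$. Applying both parts of Proposition~\ref{p:pnplengths} to $\vv$ pins $\ell(\vv) = 0$, and likewise $\ell(\ww) = 0$, so both are homogeneous. Lemma~\ref{l:basiclengthfacts}(b) then writes each summand as an integer multiple of $\hh$, and since a nonzero $k\hh$ lies in $\PNP{t}$ only when $k \ge 1$, the equality $\vv + \ww = \hh$ becomes $k + m = 1$ with $k, m \ge 1$, a contradiction.

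For the second half (reducibility at $t = B^\pm = dab$), the plan is to exhibit an explicit decomposition obtained by zeroing out one of the nonzero coordinates of $\hh$. Setting
\[
\vv = (b, -(b-1), 0) \qquad \text{and} \qquad \ww = (0, -(a+1), a),
\]
one checks that $\vv + \ww = \hh$, that both summands are nonzero and lie in $\PNP{t}$, and that direct substitution with $t = dab$ yields $\pi_t(\vv) = t - dab = 0$ and $\pi_t(\ww) = adb - t = 0$. The two halves taken together give that $B^\pm$ is the optimal lower bound.

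The conceptual core of the proof is the simultaneous invocation of the two parts of Proposition~\ref{p:pnplengths}, which squeezes the length of each summand between $0$ and $0$; once this is observed, Lemma~\ref{l:basiclengthfacts}(b) leaves no room for a nontrivial decomposition. The decomposition used for sharpness is essentially forced: at $t = dab$, the divisibility condition imposed by $\pi_t(\vv) = 0$ singles out only the two summands above (together with the trivial ones) within the admissible coordinate range $0 \le v_0 \le b$, $0 \le v_2 \le a$. I therefore anticipate no serious obstacle in either direction beyond clean bookkeeping.
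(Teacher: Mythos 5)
Your proposal is correct and follows essentially the same route as the paper: both halves use the conformality of the sum to force $v_0 \le b$, $v_2 \le a$ for each summand, then squeeze the length to $0$ via Proposition~\ref{p:pnplengths} (the paper invokes its ``in particular'' clause directly, you spell it out through Lemma~\ref{l:basiclengthfacts}(b)), and both exhibit the identical decomposition $\hh = (b,-(b-1),0) + (0,-(a+1),a)$ at $t = dab$ for sharpness.
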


\begin{proof}
	Fix $t > B^\pm \ge dab$, and suppose $\uu = (u_0,-u_1,u_2)$, $\ww = (w_0,-w_1,w_2) \in \PNP{t}$ with $\hh = \uu + \ww$.  
	Since $b = h_0 = u_0 + w_0 \ge u_0$, and by a similar argument $a \ge u_2$, Proposition~\ref{p:pnplengths} implies $\uu = \hh$.  This proves the first claim.  
	
	When $t = dab$ on the other hand, observe that 
	$$\pi_t((b, -(b-1),0)) = b(t - da) - (b-1)t = t - dab = 0$$
	and analogously $\pi_t(0,-(a+1), a) = 0$,
	meaning 
	$$\hh = (b, -(b-1),0) + (0,-(a+1), a)$$
	is reducible.  
\end{proof}

\begin{prop}\label{p:pnpreducible}
	Any $(v_0,-v_1,v_2) \in \PNP{t}\setminus\{\hh\}$ with $v_0 \ge b$ and $v_2 \ge a$ is reducible.
\end{prop}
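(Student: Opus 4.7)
The plan is to exhibit a concrete decomposition $\vv = \uu + \ww$ with $\uu, \ww \in \PNP{t} \setminus \{\0\}$. The preceding results point unambiguously to one candidate: Proposition~\ref{p:hirreducible} shows that $\hh$ itself is irreducible in $\PNP{t}$, and the current hypothesis singles out $\hh$ by excluding it from the statement. This strongly suggests we peel off a copy of $\hh$, that is, take
\[
\uu = \hh = (b,\, -(a+b),\, a)
\qquad \text{and} \qquad
\ww = \vv - \hh = \bigl(v_0 - b,\; -v_1 + (a+b),\; v_2 - a\bigr).
\]

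The verification is essentially bookkeeping. Clearly $\hh \in \PNP{t}$ and $\hh \ne \0$. For $\ww$, the fact that $\vv, \hh \in \LL_t$ gives $\ww \in \LL_t$ since $\LL_t$ is a lattice; the two coordinate conditions defining $\PNP{t}$ read $v_0 - b \ge 0$ and $v_2 - a \ge 0$, which are exactly the hypotheses on $\vv$. Finally $\ww \ne \0$ because $\vv \ne \hh$. Hence $\vv = \hh + \ww$ is a decomposition of $\vv$ into two nonzero elements of $\PNP{t}$, which is the definition of reducibility in this cone.

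I do not expect a real obstacle here. The hypotheses $v_0 \ge b$ and $v_2 \ge a$ are precisely what is needed to force $\ww \in \PNP{t}$, and the only situation where the decomposition collapses, namely $\vv = \hh$, is explicitly excluded from the statement and has already been treated separately in Proposition~\ref{p:hirreducible}. Thus the proof should be just the three lines of verification indicated above.
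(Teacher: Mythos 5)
Your proof is correct and follows essentially the same strategy as the paper's: peel off one copy of $\hh$ and check $\vv - \hh \in \PNP{t}$. The only cosmetic difference is that the paper also derives $v_1 \ge a+b$ (so that the decomposition is visibly conformal in the middle coordinate), whereas you lean directly on the definition of $\PNP{t}$, which constrains only the first and third coordinates — equally valid, since the middle coordinate of any element of $\LL_t$ with $v_0, v_2 \ge 0$ is automatically nonpositive.
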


\begin{proof}
	Fix $\vv = (v_0,-v_1,v_2) \in \PNP{t}$ with $v_0 \ge b$ and $v_2 \ge a$. We claim that $v_1 \ge (a+b)$. Using $\pi_t(\vv) = 0$, we see
	\[tv_1 
	= (t-da)v_0 + (t+db)v_2 
	\ge  (t-da)b + (t+db)a 
	= t(a+b),\]
	which, after cancellation, proves the claim.  We now have $v_0 \ge b, v_1 \ge a+b$ and $v_2 \ge b$. Since $\vv \neq \hh$ by assumption, $\vv$ is a conformal sum of $\hh$ and $\vv - \hh \in \PNP{t}$, as desired.
\end{proof}

The previous proposition establishes that, for sufficiently large $t$, any elements of $\mathcal H(\PNP{t})$ necessarily lie within an $\hh$-neighborhood of the bounding coordinate planes.  With this in mind, let
\[
\SSpm{1,2}(t) = \{\vv \in \PNP{t} \mid v_0 \le b\},
\qquad
\SSpm{0,1}(t) = \{\vv \in \PNP{t} \mid v_2 \le a\},
\]
so that $\PNPhat{t} = \SSpm{1,2}(t) \cup \SSpm{0,1}(t)$ is the region represented in Figure~\ref{fig:pnp} by thin blue strips.

\begin{figure}[t]
\begin{center}
\includegraphics[height=1.8in]{pnp-before.pdf}
\hspace{3em}
\includegraphics[height=1.8in]{pnp-after.pdf}
\end{center}
\caption{The elements of $\mathcal H(\PNP{t})$ (left) and $\mathcal H(\PNP{t+\rho})$ (right) for $M_t = \<47,49,52\>$.}
\label{fig:pnp}
\end{figure}

\begin{prop}\label{p:pnpmap}
	If $t > B^{\pm}$, then the map 
	$\widehat{\varphi}_{\pm}: \PNPhat{t} \to \PNPhat{t+\rho}$ given by
	\[\widehat{\varphi}_{\pm}(\vv) = \begin{cases}
		\varphi_{1,2}(\vv) & v_0 \le b \\
		\varphi_{0,1}(\vv) & v_2 \le a
	\end{cases}\]
	is a well-defined bijection.
\end{prop}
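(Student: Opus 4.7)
The plan is to verify well-definedness of $\widehat{\varphi}_\pm$ (agreement on the overlap of the two pieces, and that its image lies in $\PNPhat{t+\rho}$), then exhibit an explicit piecewise inverse on $\PNPhat{t+\rho}$. Since Theorem~\ref{t:big_phi_properties} already provides that $\varphi_{1,2}$ and $\varphi_{0,1}$ are linear bijections $\LL_t \to \LL_{t+\rho}$ that preserve length, the substantive content lies entirely in a sign and bound analysis on each piece.

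For well-definedness, the overlap $\SSpm{1,2}(t) \cap \SSpm{0,1}(t)$ consists of $\vv \in \PNP{t}$ with $v_0 \le b$ and $v_2 \le a$, which equals $\{\hh\}$ by Proposition~\ref{p:pnplengths} (applicable because $t > B^\pm$). Since $\ell(\hh) = 0$, both $\varphi_{1,2}$ and $\varphi_{0,1}$ fix $\hh$, so the two cases agree there. For $\vv \in \SSpm{1,2}(t)$, Proposition~\ref{p:pnplengths}(a) gives $\ell(\vv) \le 0$, and the identity $\rho/(db) = a(a+b)$ yields $\varphi_{1,2}(\vv) = \vv + a(a+b)\ell(\vv)(\ee_1 - \ee_2)$. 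With $\ell(\vv) \le 0$, the first coordinate is preserved (hence still in $[0,b]$), the middle coordinate can only become more negative, and the third coordinate can only increase, so $\varphi_{1,2}(\vv) \in \SSpm{1,2}(t+\rho) \subseteq \PNPhat{t+\rho}$. The case $\vv \in \SSpm{0,1}(t)$ is analogous via Proposition~\ref{p:pnplengths}(b) and $\rho/(da) = b(a+b)$.

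For bijectivity I will define $\widehat{\psi}_\pm : \PNPhat{t+\rho} \to \PNPhat{t}$ piecewise by $\varphi_{1,2}^{-1}$ on $\{v_0' \le b\}$ and $\varphi_{0,1}^{-1}$ on $\{v_2' \le a\}$; the same overlap argument (using $t+\rho > B^\pm$) gives well-definedness. The key step is showing $\widehat{\psi}_\pm(\vv') \in \PNPhat{t}$. For $\vv' \in \SSpm{1,2}(t+\rho)$, set $\widehat{\vv} = \varphi_{1,2}^{-1}(\vv')$; then $\widehat{v}_0 = v_0' \in [0,b]$ and $\ell(\widehat{\vv}) = \ell(\vv') \le 0$ by Theorem~\ref{t:big_phi_properties}. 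Applying the kernel relation $\pi_t(\widehat{\vv}) = 0$ in the forms~(\ref{eqn:1}) and~(\ref{eqn:2}) expresses $\widehat{v}_2$ and $\widehat{v}_1$ as sums whose every term has the correct sign, giving $\widehat{v}_2 \ge 0$ and $\widehat{v}_1 \le 0$; hence $\widehat{\vv} \in \SSpm{1,2}(t)$. The case $\vv' \in \SSpm{0,1}(t+\rho)$ is symmetric via~(\ref{eqn:0}) and~(\ref{eqn:1}). Since each piece of $\widehat{\varphi}_\pm$ lands inside the piece on which $\widehat{\psi}_\pm$ applies the matching linear inverse, both compositions are the identity.

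The main obstacle is this last containment $\widehat{\psi}_\pm(\PNPhat{t+\rho}) \subseteq \PNPhat{t}$: whereas the forward direction has the correct signs for free via monotonicity in $\ell(\vv)$, the formula for $\varphi_{i,j}^{-1}$ pushes coordinates in the opposite direction, so the required sign information has to be extracted indirectly from the kernel relations~(\ref{eqn:0})--(\ref{eqn:2}) rather than read off the formula.
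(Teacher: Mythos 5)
Your proposal is correct and follows essentially the same route as the paper's proof: well-definedness via the length-sign dichotomy of Proposition~\ref{p:pnplengths} together with both branches fixing $\hh$, and bijectivity by checking that the explicit preimages $\varphi_{1,2}^{-1}(\ww)$, $\varphi_{0,1}^{-1}(\ww)$ land back in $\PNPhat{t}$ using the kernel relations. The only (harmless) difference is organizational: you package injectivity and surjectivity as a two-sided piecewise inverse and apply relations~\eqref{eqn:0}--\eqref{eqn:2} at parameter $t$ to the preimage, whereas the paper argues injectivity separately and runs the same sign computations at parameter $t+\rho$ on $\ww$.
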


\begin{proof}
		We begin by showing $\widehat{\varphi}_{\pm}$ is well-defined. Consider $\vv = (v_0,-v_1,v_2) \in \PNP{t}$. By Proposition~\ref{p:pnplengths}, if $v_0 \le b$ and $v_2 > a$, then $\ell(\vv) \le 0$. The application of $\varphi_{1,2}$ increases distance from each coordinate axis, ensuring that $\varphi_{1,2}(\SSpm{1,2}(t)) \subseteq \PNPhat{t+\rho}$. Symmetrically, $\varphi_{0,1}(\SSpm{0,1}(t)) \subseteq \PNPhat{t+\rho}$. It remains to show that the pre-image of any trade in the co-domain contains exactly one trade in the domain.
		By Lemma~\ref{l:basiclengthfacts}, if $v_0 \le b$ and $v_2 \le a$, then $\vv = \hh$. Since each $\varphi_{i,j}$ preserves length, we necessarily have $\widehat{\varphi}_{\pm}(\hh) = \hh$, so $\widehat{\varphi}_{\pm}$ is well-defined.
		
		Injectivity follows quickly from the piecewise linear map's trivial kernel. To prove surjectivity, we must show that $\widehat{\varphi}_{\pm}^{-1}(\PNPhat{t+\rho}) \subseteq \PNPhat{t}$. We consider $3$ cases, throughout which we denote $\ww = (w_0,-w_1,w_2) \in \PNP{t+\rho}$.
		
		First, assume $w_0 \le b$ and $w_2 \le a$. By Lemma~\ref{l:basiclengthfacts}, necessarily $\ww = \hh$. As mentioned above,  $\widehat{\varphi}_{\pm}(\hh) = \hh$, and we are done.
		
		Next, suppose $w_0 \le b$ and $w_2 > a$. $\vv = \ww - a(a+b)\ell(\ww)(\ee_1 - \ee_2)$, denoted by $\vv = (v_0, -v_1, v_2)$. Clearly $\varphi_{1,2}(\vv) = \ww$, yet we must ensure that $\vv \in \PNP{t}$. We know by Proposition~\ref{p:pnplengths} that $\ell(\ww) = -\abs{\ell(\ww)}$. Therefore, it is sufficient to show that
		\[
		v_1 = w_1 - a(a+b)\abs{\ell(\ww)} \ge 0 \qquad \text{ and } \qquad v_2 = w_2 - a(a+b)\abs{\ell(\ww)} \ge 0.
		\]
		Using equation~\eqref{eqn:2}, we observe that 
		\[dbw_1 - \rho\abs{\ell(\ww)} 
		= db(w_1 - a(a+b)\abs{\ell(\ww)}) 
		= (t+db)\abs{\ell(\ww)} + d(a+b)w_0 \ge 0
		\]
		which verifies the former inequality.
		Similarly, using equation~\eqref{eqn:1}, 
		\[
		dbw_2 - \rho\abs{\ell(\ww)}
		= db(w_2 - a(a+b)\abs{\ell(\ww)}) 
		= daw_0 + t\abs{\ell(\ww)} \ge 0,
		\]
		which verifies the latter inequality. Indeed, $\varphi_{1,2}^{-1}(\PNPhat{t+\rho}) \subseteq \PNPhat{t}$.
		
	 	Finally, assume $w_0 > b$ and $w_2 \le a$. Choose $\vv = \ww - b(a+b)\ell(\ww)(\ee_0 - \ee_1)$. It is clear that $\varphi_{0,1}(\vv) = \ww$. Since $\ell(\ww) \ge 0$, to verify $\varphi_{0,1}^{-1}(\PNPhat{t+\rho}) \subseteq \PNPhat{t}$, we need only show
		\[
		v_0 = w_0 - b(a+b)\ell(\ww) \qquad
		 \text{ and } \qquad  
		v_1 = w_1 - b(a+b)\ell(\ww).\]
		
		We arrive at these inequalities using nearly identical algebra to the previous case. The details are omitted, but we use equations~\eqref{eqn:0} and~\eqref{eqn:2} for former and latter, respectively. We conclude that $\varphi_{0,1}^{-1}(\PNPhat{t+\rho}) \subseteq \PNPhat{t}$, completing the proof.
\end{proof}

\begin{thm}\label{t:pnpgraver}
	For all $t> B^{\pm}$ and $\vv \in \PNPhat{t}$, we have $\widehat{\varphi}_{\pm}(\vv) \in \PNP{t+\rho}$ is reducible if and only if $\vv \in \PNP{t}$ is reducible.  In particular,
	\[
	\widehat{\varphi}_{\pm}(\mathcal{H}(\PNP{t})) = \mathcal{H}(\PNP{t+\rho})
	\qquad \text{and} \qquad
	\abs{\mathcal{H}(\PNP{t})} = \abs{\mathcal{H}(\PNP{t+\rho})}.
	\]
\end{thm}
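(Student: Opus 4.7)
The plan is to prove the reducibility equivalence first and then deduce both displayed consequences. The main obstacle throughout is the piecewise-linear nature of $\widehat{\varphi}_\pm$: to exploit linearity of a single branch $\varphi_{i,j}$, one must first arrange that all summands in any decomposition lie in the same branch. Luckily, the branches are cut out by the coordinate conditions $v_0 \le b$ and $v_2 \le a$, which behave well under addition of vectors with non-negative entries in those slots.

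For the forward direction, suppose $\vv \in \PNPhat{t}$ is reducible, so $\vv = \vv_1 + \vv_2$ with $\vv_1, \vv_2 \in \PNP{t}$ nonzero. The case $\vv = \hh$ is vacuous by Proposition~\ref{p:hirreducible}, so without loss of generality I would assume $v_0 \le b$ (the case $v_2 \le a$ being symmetric). Since $v_{1,0}, v_{2,0} \ge 0$ and $v_{1,0} + v_{2,0} = v_0 \le b$, both summands lie in $\SSpm{1,2}(t)$, so the piecewise definition collapses and linearity of $\varphi_{1,2}$ yields
\[
\widehat{\varphi}_\pm(\vv) = \varphi_{1,2}(\vv_1) + \varphi_{1,2}(\vv_2) = \widehat{\varphi}_\pm(\vv_1) + \widehat{\varphi}_\pm(\vv_2).
\]
Proposition~\ref{p:pnpmap} ensures both summands lie in $\PNPhat{t+\rho} \subseteq \PNP{t+\rho}$, and injectivity of $\widehat{\varphi}_\pm$ keeps them nonzero.

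For the reverse direction, suppose $\widehat{\varphi}_\pm(\vv) = \ww_1 + \ww_2$ is reducible in $\PNP{t+\rho}$. Note $\vv \ne \hh$, since $\widehat{\varphi}_\pm(\hh) = \hh$ is irreducible by Proposition~\ref{p:hirreducible} applied at $t+\rho$. Assume without loss of generality $v_0 \le b$, and write $\ww = \widehat{\varphi}_\pm(\vv) = \varphi_{1,2}(\vv)$; since $\varphi_{1,2}$ fixes the zeroth coordinate, $w_0 = v_0 \le b$. The same non-negativity argument places $\ww_1, \ww_2 \in \SSpm{1,2}(t+\rho)$, the region on which $\widehat{\varphi}_\pm^{-1}$ agrees with the linear map $\varphi_{1,2}^{-1}$. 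Linearity then produces the matching decomposition $\vv = \widehat{\varphi}_\pm^{-1}(\ww_1) + \widehat{\varphi}_\pm^{-1}(\ww_2)$, with the summands in $\PNPhat{t} \subseteq \PNP{t}$ by Proposition~\ref{p:pnpmap}.

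The two displayed conclusions now follow easily. By Proposition~\ref{p:pnpreducible}, every irreducible element of $\PNP{t}$ other than $\hh$ lies in $\PNPhat{t}$, and $\hh \in \PNPhat{t}$ as well, so $\mathcal{H}(\PNP{t}) \subseteq \PNPhat{t}$ (and likewise for $t+\rho$). Combined with the reducibility equivalence just established and the bijectivity of $\widehat{\varphi}_\pm$ from Proposition~\ref{p:pnpmap}, this gives $\widehat{\varphi}_\pm(\mathcal{H}(\PNP{t})) = \mathcal{H}(\PNP{t+\rho})$, and the equality of cardinalities is immediate.
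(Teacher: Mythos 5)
Your proof is correct and follows essentially the same route as the paper: conformality of a decomposition forces all summands into a single strip $\SSpm{1,2}$ or $\SSpm{0,1}$, so $\widehat{\varphi}_{\pm}$ (or its inverse) acts there by one linear branch, and Propositions~\ref{p:pnpmap} and~\ref{p:pnpreducible} then give the bijection of Hilbert bases. The only difference is that you spell out the converse direction via the inverse branch, which the paper compresses into an appeal to bijectivity.
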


\begin{proof}
	Fix $\uu, \vv, \ww \in \PNPhat{t}$ and suppose $\vv = \uu + \ww$.  Since this sum is conformal, if $\vv \in \SSpm{1,2}(t)$, then $a > v_0 = u_0 + w_0$, while if $\vv \in \SSpm{0,1}(t)$, then $b > v_2 = u_2 + w_2$.  As~such, we can assume either $\uu, \vv, \ww \in \SSpm{0,1}(t)$ or $\uu, \vv, \ww \in \SSpm{1,2}(t)$.  The piecewise linearity of $\widehat{\varphi}_{\pm}$ thus ensures $\widehat{\varphi}_{\pm}(\ww) = \widehat{\varphi}_{\pm}(\uu) + \widehat{\varphi}_{\pm}(\vv) \in \PNPhat{t+\rho}$ is reducible.  Bijectivity from Proposition~\ref{p:pnpmap} ensures the converse holds as well.  
	
	Lastly, by Proposition ~\ref{p:pnpreducible} any trade in $\mathcal H(\PNP{t})$ must lie $\SSpm{0,1}(t)$ or $\SSpm{1,2}(t)$.  As such, the above argument implies $\widehat{\varphi}_{\pm}$ restricts to a bijection $\mathcal H(\PNPhat{t}) \to \mathcal H(\PNPhat{t+\rho})$, as desired.  
\end{proof}

\section{The Hilbert Basis of $\PPN{t}$}
\label{sec:ppn}

We now turn our attention to the orthant $\PPN{t}$.  As in the previous section, we begin by characterizing the lengths of trades (Proposition~\ref{p:ppnlengths}), identifying where in $\PPN{t}$ the trades in $\mathcal H(\PPN{t})$ must lie (Theorem~\ref{t:ppnreducible}), and constructing bijective maps (Proposition~\ref{p:ppnmaps}) used to obtain $\mathcal H(\PPN{t+\rho})$ from $\mathcal H(\PPN{t})$ in Theorem~\ref{t:ppngraver}.  

\begin{prop}\label{p:ppnlengths}
	Any $\vv = (v_0,v_1,-v_2) \in \PPN{t}$, satisfies $\ell(\vv) > 0$. Consequently, any length~$d$ trade is irreducible, and if $\vv \in \PPN{t}$ is reducible, then $\ell(\vv) \ge 2d$.  
\end{prop}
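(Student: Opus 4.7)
The plan is to extract all three assertions from a single substitution into equation~\eqref{eqn:1}. Writing $\vv = (v_0, v_1, -v_2) \in \PPN{t}$ with $v_0, v_1, v_2 \ge 0$, I would plug the coordinates of $\vv$ (treating the third entry as $-v_2$) into the right-hand side of~\eqref{eqn:1} and use $\pi_t(\vv) = 0$ to obtain
\[
t\,\ell(\vv) \;=\; d(av_0 + bv_2).
\]
Since $t, d, a, b > 0$ and $v_0, v_2 \ge 0$, the right-hand side is non-negative, forcing $\ell(\vv) \ge 0$. Moreover, $\ell(\vv) = 0$ would force $v_0 = v_2 = 0$, after which $\pi_t(\vv) = t v_1 = 0$ forces $v_1 = 0$ as well; so the only $\vv \in \PPN{t}$ with $\ell(\vv) = 0$ is $\vv = \0$. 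For any nonzero $\vv \in \PPN{t}$, then, $\ell(\vv) > 0$, and combining this with Lemma~\ref{l:basiclengthfacts}(a) (which gives $\ell(\vv) \in d\ZZ$) upgrades the strict positivity to $\ell(\vv) \ge d$.

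The two consequences follow from additivity of $\ell$ on conformal sums. If $\vv = \uu + \ww$ is a conformal decomposition in $\PPN{t}$ with $\uu$ and $\ww$ both nonzero, then $\ell(\uu), \ell(\ww) \ge d$ by the previous step, so $\ell(\vv) = \ell(\uu) + \ell(\ww) \ge 2d$. This simultaneously rules out any nontrivial decomposition when $\ell(\vv) = d$ (so length-$d$ trades are irreducible) and forces $\ell(\vv) \ge 2d$ for any reducible trade.

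I do not anticipate any real obstacle: the proposition reduces to one rearrangement of equation~\eqref{eqn:1} together with the length-divisibility fact from Lemma~\ref{l:basiclengthfacts}(a), and everything else is bookkeeping about the sign convention implicit in the notation $\vv = (v_0, v_1, -v_2)$. The only subtlety worth flagging in the write-up is that the statement ``$\ell(\vv) > 0$'' implicitly excludes $\vv = \0$, since the zero trade lies in every orthant and has length zero.
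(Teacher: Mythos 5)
Your proposal is correct and follows essentially the same route as the paper: where you rearrange equation~\eqref{eqn:1} into $t\,\ell(\vv) = d(av_0+bv_2)$ and exclude the zero trade by a direct coordinate check, the paper rearranges equation~\eqref{eqn:2} into $(t+db)\ell(\vv) = d\big((a+b)v_0+bv_1\big)$ and rules out $\ell(\vv)=0$ via Lemma~\ref{l:basiclengthfacts}(b), a purely cosmetic difference. Your handling of the consequences (additivity of $\ell$ plus $d\mid\ell(\vv)$ from Lemma~\ref{l:basiclengthfacts}(a), giving $\ell(\uu),\ell(\ww)\ge d$ for any nontrivial conformal decomposition) is exactly what the paper's appeal to ``linearity of $\ell$'' means, just spelled out more explicitly.
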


\begin{proof}
	We have $k\hh \in \pm \PNP{t}$ for all $k \in \ZZ$, so $\ell(\vv) \neq 0$. Using equation~\eqref{eqn:2},
	\[(t+db)\ell(\vv) = d((a+b)v_0 + bv_1) > 0,\]
	and since $t + db > 0$, it follows that $\ell(\vv) > 0$. The subsequent claims then follow from the linearity of $\ell$.
\end{proof}


	
\begin{prop}\label{p:lendexists}
	For $t > B^+$, there exists $\vv \in \PPN{t}$ with $\ell(\vv) = d$, while if $t = B^+$, then every $\vv \in \PPN{t}$ has $\ell(\vv) > d$.  
	In particular, $B^+$  is the optimal lower bound.
\end{prop}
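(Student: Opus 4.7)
The plan is to recast the existence of a length-$d$ trade $\vv = (v_0, v_1, -v_2) \in \PPN{t}$ (with $v_0, v_1, v_2 \ge 0$) as a representability question in the two-generator numerical semigroup $\langle a+b, b\rangle$. Imposing $\ell(\vv) = d$ in equation~\eqref{eqn:2} forces $(a+b)v_0 + bv_1 = t + bd$, while the condition $v_2 \ge 0$ translates into $v_0 + v_1 \ge d$. So the problem reduces to determining when the integer $T := t + bd$ admits a non-negative representation of the form $(a+b)v_0 + bv_1 = T$ with $v_0 + v_1 \ge d$.

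Since $\gcd(a+b, b) = \gcd(a,b) = 1$, the Frobenius number of $\langle a+b, b\rangle$ equals $(a+b)b - (a+b) - b = ab + b^2 - a - 2b$. A short algebraic check confirms that this number coincides exactly with $B^+ + bd$. Thus for $t = B^+$, $T$ equals the Frobenius number and admits no non-negative representation at all, so no length-$d$ trade exists; combined with Proposition~\ref{p:ppnlengths} and Lemma~\ref{l:basiclengthfacts}(a), every nonzero trade in $\PPN{B^+}$ has length in $d\ZZ_{>0}$ and hence satisfies $\ell(\vv) \ge 2d > d$, handling the equality case of the statement.

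For $t > B^+$, the relation $T > \mathrm{Frob}(\langle a+b, b\rangle)$ guarantees some non-negative representation of $T$ exists. I would then select the \emph{canonical} one, where $v_0 \in \{0, 1, \ldots, b-1\}$; since the alternative representations $(v_0 + kb, v_1 - k(a+b))$ decrease $v_0 + v_1$ by $ka$, this canonical choice maximizes $v_0 + v_1 = (T - av_0)/b$ among non-negative representations. The main obstacle is the remaining inequality $v_0 + v_1 \ge d$, equivalently $av_0 \le t$. I would handle this in two cases: if $v_0 \ge d$, then $v_1 \ge 0$ rearranges to $t \ge (a+b)v_0 - bd \ge a v_0$; if $v_0 < d$, then $t > da > a v_0$ follows directly from the standing hypothesis $t > da$. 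In either case one obtains the required trade $(v_0, v_1, -(v_0 + v_1 - d)) \in \PPN{t}$, establishing existence and hence sharpness of the lower bound $B^+$.
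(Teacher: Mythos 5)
Your proposal is correct and follows essentially the same route as the paper: both reduce the existence of a length-$d$ trade to representing $t+db$ in the two-generated semigroup $\<b,a+b\>$, identify $B^+$ with $\mathcal F(\<b,a+b\>)-db$, and use non-representability of the Frobenius number for the sharpness claim at $t=B^+$. The only (harmless) deviation is in verifying $v_0+v_1\ge d$, which you do by a direct case analysis on $v_0$ versus $d$ while the paper argues via factorization lengths of $t+db$ in $\<b,a+b\>$; you also spell out, via Proposition~\ref{p:ppnlengths} and Lemma~\ref{l:basiclengthfacts}(a), why non-existence of a length-$d$ trade forces $\ell(\vv)\ge 2d$ at $t=B^+$, a step the paper leaves implicit.
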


\begin{proof}
	Consider the 2-generated semigroup
	\[S = \<b,a+b\>.\]
	It is known that $\mathcal{F}(S) = b(a+b) - 2b - a = (b-1)(a+b) - b$. As such, if $$t > \mathcal{F}(S) - db = (b-1)(a+b) - b - bd,$$
	it follows there exists a positive integral solution $(v_0, v_1)$ to the Diophantine equation
	\begin{equation}\label{eq:PPNsubsemigroup}
		bv_1 + (a+b)v_0 = t+db.
	\end{equation}
	Fixing this choice of $v_1$ and $v_0$, when $t = da$, we have $d(a+b) = bv_1 + (a+b)v_0 \in S$, which means by definition that $(v_1,v_0)$ is a factorization of $d(a+b)$ in $S$.  Clearly $(0,d)$ is another factorization of $d(a + b)$ in $S$, one supported only on the maximal generator of $S$, so it must be the minimal length factorization.  Additionally, the maximal element $n \in S$ with factorization length strictly less than $d$ is $(d-1)(a+b)$. As such, for $t > da$, since $t+db = d(a+b) > (d-1)(a+b)$, it follows that $v_0 + v_1 \ge d$. 
	
	We claim $\vv = (v_0,v_1, -(v_0 + v_1-d)) \in \PPN{t}$.  It is clear $\vv$ has the proper sign pattern by the reasoning above, so we need only verify $\pi_t(\vv) = 0$. After substituting the expression for $t + db$ given in equation~\eqref{eq:PPNsubsemigroup}, we see that
	\[\pi_t(\vv) = (t-da)v_0 + tv_1 - (t+db)(v_0 + v_1) + d(t+db) = d((a+b)v_0 + bv_1)\left(-1 + 1\right) = 0\]
	and since $\ell(\vv) = d$, we have constructed the desired trade. 

	Optimality of $B^+$ follows from the fact that any such $\vv$ yields an expression
	\[
		(a+b)v_0 + bv_1 = t+db,
	\]
	which is impossible when $t+db = \mathcal{F}(S)$. 
\end{proof}

Proposition~\ref{p:lendexists} tells us that for $t > B^+$, there is at least one length $d$ trade in $\PPN{t}$.  Let $\al \in \PPN{t}$ denote the length $d$ trade with $\alpha_0$ minimal.  Note that $0 \le \alpha_0 < b$ since minimality requires $\al - \hh \notin \PPN{t}$.   
By the division algorithm, there exist $q, \beta_1 \in \ZZ$ with $0 \le \beta_1 < a + b$ such that
$\alpha_1 = q(a + b) + \beta_1$.  Let 
$\be = \al + q\hh.$
Note that $\be = (\beta_0, \beta_1, -\beta_2) \in \PPN{t}$ since 
$\beta_0 = \alpha_0 + qb \ge 0$ and equation~\eqref{eqn:0} implies
\[da\beta_1 = d(a+b)\beta_2 - d(t-da) < d(a+b)\beta_2,\]
so $\beta_2 \ge 0$.  
We have thus located length $d$ trades
\[
\al = (\alpha_0, \alpha_1, -\alpha_2) \in \PPN{t}
\text{ with }
\alpha_0 < b
\quad \text{and} \quad
\be = (\beta_0, \beta_1, -\beta_2) \in \PPN{t}
\text{ with }
\beta_1 < a + b,
\]
so that, in particular, $\al$ has minimal first coordinate and $\be$ has minimal second coordinate among trades in $\PPN{t}$ with length $d$.  These form the endpoints of the line segment that dilates as $t$ increases, as depicted in Figure~\ref{fig:ppn}.  Note that $\al$ and $\be$ may coincide.  

We will see in Proposition~\ref{p:pnpreducible} that it is in
\[
\SSplus{1,2}(t) = \{(v_0,v_1,-v_2) \in \PPN{t} \mid v_0 < b\}
\quad \text{and} \quad 
\SSplus{0,2}(t) = \{(v_0,v_1,-v_2) \in \PPN{t} \mid v_1 < a+b\}
\]
where irreducible trades of coordinate sum greater than $d$ must lie.  
For $t > B^+$, $\al \in \SSplus{1,2}(t)$ and $\be \in \SSplus{0,2}(t)$ by Theorem~\ref{p:lendexists}. Moreover, it is worth noting that the intersection $\SSplus{1,2}(t) \cap \SSplus{0,1}(t)$ can be non-empty only when $\al = \be$, which, if it occurs for~$M_t$, does not occur for $M_{t+k\rho}$ for any positive $k$.   

\begin{figure}[t]
\begin{center}
\includegraphics[height=1.8in]{ppn-before.pdf}
\hspace{3em}
\includegraphics[height=1.8in]{ppn-after.pdf}
\end{center}
\caption{The elements of $\mathcal H(\PPN{t})$ (left) and $\mathcal H(\PPN{t+\rho})$ (right) for $M_t = \<47,49,52\>$.}
\label{fig:ppn}
\end{figure}

\begin{thm}\label{t:ppnreducible}
	For $t > B^+$, if $\vv = (v_0,v_1,-v_2)\in \PPN{t}\setminus\left(\SSplus{1,2}(t) \cup \SSplus{0,2}(t)\right)$ and  $\ell(\vv) > d$, then $\vv$ is reducible.
\end{thm}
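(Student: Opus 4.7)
The plan is to find a length-$d$ trade $\gamma \in \PPN{t}$ with $\vv - \gamma \in \PPN{t}$ also, so that $\vv = \gamma + (\vv - \gamma)$ exhibits $\vv$ as a sum of two non-zero trades in $\PPN{t}$. I would take $\gamma = \al$ when $v_1 \geq \alpha_1$, and $\gamma = \al + k\hh$ with $k = \lceil (\alpha_1 - v_1)/(a+b) \rceil$ when $v_1 < \alpha_1$.

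By Lemma~\ref{l:basiclengthfacts}(b), the vector $\vv - n\al$ has length $0$, so $\vv = n\al + K\hh$ for some $K \in \ZZ$, where $n = \ell(\vv)/d \geq 2$. When $v_1 \geq \alpha_1$, the trade $\vv - \al$ has first coordinate $v_0 - \alpha_0 \geq b - \alpha_0 > 0$ (using $\alpha_0 < b$) and second coordinate $\geq 0$ by hypothesis, placing it in $\PPN{t}$. When $v_1 < \alpha_1$, the identity $\alpha_1 = q(a+b) + \beta_1$ with $\beta_1 < a+b$, combined with $v_1 \geq a+b$, ensures $k \in \{1, \ldots, q\}$, so $\gamma_k := \al + k\hh$ is a length-$d$ trade lying in $\PPN{t}$ (as its coordinates interpolate between those of $\al$ and $\be$). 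Substituting $\vv = n\al + K\hh$ and using $v_1 = n\alpha_1 - K(a+b)$ gives $(\alpha_1 - v_1)/(a+b) = K - (n-1)\alpha_1/(a+b)$, so since $K \in \ZZ$ one has $K - k = \lfloor (n-1)\alpha_1/(a+b) \rfloor \geq 0$. Consequently the first coordinate $(n-1)\alpha_0 + (K-k)b$ of $\vv - \gamma_k$ is non-negative; the second coordinate is $\geq 0$ by the very choice of $k$. Hence $\vv - \gamma \in \PPN{t}$ in either case, and it is non-zero since its length is $(n-1)d > 0$.

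The main obstacle is the bookkeeping in the second case: verifying the upper bound $k \leq q$ (which amounts to $\lceil (\alpha_1 - v_1)/(a+b) \rceil \leq q$, i.e.~$v_1 \geq \beta_1$, and uses $\beta_1 < a+b \leq v_1$), and identifying $K - k = \lfloor (n-1)\alpha_1/(a+b) \rfloor$ to get the non-negativity of the first coordinate of $\vv - \gamma_k$ for free. The hypothesis $t > B^+$ enters only through Proposition~\ref{p:lendexists}, which ensures the length-$d$ trade $\al$ (and hence $\be$, $q$, and each $\gamma_k$) exists; the core decomposition argument does not require any further lower bound on $t$.
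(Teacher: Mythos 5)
Your proof is correct and follows the same high-level plan as the paper's: decompose $\vv$ as a sum of a length-$d$ trade $\gamma_k = \al + k\hh$ and the remainder $\vv - \gamma_k$, using the same choice of $k$ (your ceiling formula is exactly the paper's ``minimal positive $k$ with $v_1 \ge \alpha_1 - k(a+b)$''). Where you diverge is in how the membership $\vv - \gamma_k \in \PPN{t}$ is verified. The paper runs explicit inequality chains starting from $\pi_t(\vv) = 0$ and $\ell(\vv) = nd$, establishing $v_2 \ge \alpha_2 + ka$ and then $v_0 - (\alpha_0 + kb) \ge 0$; that last step genuinely invokes the bound $t > B^+$. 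You instead observe that $\vv - n\al$ is homogeneous, so $\vv = n\al + K\hh$ by Lemma~\ref{l:basiclengthfacts}(b), whence $\vv - \gamma_k = (n-1)\al + (K-k)\hh$; computing $K - k = \lfloor (n-1)\alpha_1/(a+b) \rfloor \ge 0$ then makes the first-coordinate check immediate, and the third coordinate is automatically nonpositive for a nonzero trade with nonnegative first two entries (a point you could make explicit, but which is forced by $\pi_t = 0$). Your route is shorter and more conceptual, and it clarifies that the hypothesis $t > B^+$ is needed only through Proposition~\ref{p:lendexists} to produce $\al$; the paper's algebraic estimates obscure this. One nit: to interpolate your $\lceil\cdot\rceil$/$\lfloor\cdot\rfloor$ identity, you want $\lceil K - x \rceil = K - \lfloor x \rfloor$ for integer $K$, which is the right identity, but it deserves a word since it is the crux of getting $K - k \ge 0$ for free.
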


\begin{proof}
	For $t > B^+$, we are guaranteed the existence of $\al$ and $\be$ as described above, each with coordinate sum $d$, satisfying $\alpha_0 < b$ and $\beta_1 < a+b$. We will first attempt to write $\vv = (\vv-\al) + \al$. If $\vv - \al \in \PPN{t}$, then $\vv$ is clearly reducible in $\PPN{t}$, and we are done. By assumption that $\alpha_0 < b \le v_0$, which implies $v_0 - \alpha_0 > 0$. Therefore, we must have $v_1 - \alpha_1 < 0$ or $v_2 - \alpha_2 < 0$. We claim that if $v_1 - \alpha_1 > 0$, it follows that $v_2 - \alpha_2 > 0$. Indeed, if we express $\alpha_1$ and $\alpha_2$ in terms of $\alpha_0$ using our linear constraints, $\pi(\al) = 0$ and $\ell(\al) = d$, we obtain the system of equations
	\[\alpha_1 - \alpha_2 
	= d - \alpha_0 \qquad a\alpha_1 - (a+b)\alpha_2 
	= -(t-da)\]
	
	We can solve this for $\alpha_2$ in terms of $\alpha_0$ to obtain the equation
	\begin{equation}\label{eq:reducibilityrelation}
		b\alpha_2 = t - a\alpha_0.
	\end{equation}
	Using $\pi_t(\vv) = 0$, $\ell(\al) = d$, and $\ell(\vv) = nd$ for some $n \ge 2$, we obtain the expression.
	\[(a+b)v_2 
	= (t-da)n + av_1 
	\ge (t-da)n + a\alpha_1 
	\ge (t-da) + a(d - \alpha_0 + \alpha_2).\]
	This chain of inequality can be extended using some more algebra, which gives
	\[(a+b)v_2 
	\ge t - a\alpha_0 + a\alpha_2 
	= (a+b)\alpha_2 
	\ge (a+b)\alpha_2,\]
	and we can cancel $a+b$ from the left and rightmost expressions in the previous centered chain to see that $v_2 \ge \alpha_2$, as claimed. We therefore conclude that $\vv - \al \not\in \PPN{t}$ implies $v_1 - \alpha_1 < 0$, the only remaining possibility. 
	
	Assuming this is true, we must locate a non-negative integer $k \le q$, where $\be = \al + q\hh$, such that $\vv - (\al + k\hh) \in \PPN{t}$ in order to write 
	\[\vv = (\vv - (\al + k\hh)) + (\al + k\hh).\] 
	By assumption, we have $v_1 < \alpha_1$, $a+b \le v_1$, $\alpha_1 = \beta_1 - qh_1$, and $\beta_1 < a + b$. We can combine these inequalities to obtain
	\[v_1 - \alpha_1 
	\ge a+b - \alpha_1 
	= a+b - \beta_1 + qh_1 
	> qh_1,\]
	which implies that $v_1 - (\alpha_1 + qh_1) > 0$. Since $v_1 - \alpha_1 < 0$, we are guaranteed the existence of a minimal positive $k$ such that $v_1 \ge \alpha_1 + kh_1 = \alpha_1 - k(a+b)$. 
	
	We claim it follows that $v_2 \ge \alpha_2 + kh_2 = \alpha_2 + ka$. To see this, we use the mentioned assumptions along with $\ell(\al) = d$, $\ell(\vv) = nd$, $n\ge 2$ to obtain the chain of inequality
	\[(a+b)v_2 
	= (t-da)n + av_1 
	\ge (t-da)n + a(\alpha_1 - k(a+b)) 
	\ge 2(t-da) + a(d - \alpha_0 + \alpha_2) + ka(a+b).\]
	Using equation $3.1$, we can continue the chain with similar algebra to previous cases, which yields
	\[(a+b)v_2 
	\ge (t-da) + (t-a\alpha_0) + a\alpha_2 +ka(a+b) 
	= (t-da) + (a+b)(\alpha_2 + ka) 
	\ge (a+b)(\alpha_2 + ka),\]
	which gives the desired inequality after cancelling $(a+b)$ from the left and right-most expressions in the previous chain. It only remains to show that $v_0 - (\alpha_0 + kb)  \ge 0$.
	
	Since $k$ is the minimal integer such that $v_1 - (\alpha_1 - k(a+b)) \ge 0 $, it follows that $v_1 - (\alpha_1 - k(a+b)) < a+b$, or equivalently,
	\[v_1 \le \alpha_1 - (k -1)(a+b) - 1.\]
	For notational convenience, we will define $Q = (k-1)(a+b) + 1$. Using $\ell(\al) = d$, $\ell(\vv) = nd$ for some integer $n\ge 2$, $v_1 \le \alpha_1 - Q$ and $\pi_t(\vv) = $, we have
	\[(a+b)v_0 
	= (t+db)n - bv_1 
	\ge 2(t+db) -b(\alpha_1 - Q) 
	= 2(t+db) -b(d - \alpha_0 + \alpha_2)+bQ.\]
	Using Equation ~\ref{eq:reducibilityrelation} and back-substitution for $Q$, we use algebra to continue the chain, obtaining
	\[(a+b)v_0 
	\ge t+db + (a+b)(\alpha_0 + b(k-1)) + b(d+1).\]
	By applying our bound for $t$, we obtain
	\[(a+b)v_0 
	> (a+b)(\alpha_0 + kb -1),\]
	and after cancellation we have $v_0  - (\alpha_0 + kb) > -1 \ge 0$, as desired. This completes the proof.
\end{proof}

Theorem~\ref{t:ppnreducible} tells us that for $t > B^+$, any irreducible trade $\vv \in \PPN{t}$ must satisfy $v_0 < b$, $v_1 < a+b$ or $\ell(\vv) = d$.  
By Proposition~\ref{p:ppnlengths}, if $\ell(\vv) = d$ then $\vv$ is irreducible.  All other irreducible trade must lie $\PPNhat{t}$. 

\begin{prop}\label{p:ppnmaps}
	For $t > B^+$, the maps 
	\[
	\begin{array}{r@{}c@{}l@{\qquad}l@{\qquad}r@{}c@{}l}
		\widehat{\varphi}_{1,2}^+: \SSplus{1,2}(t) &{}\longrightarrow{}& \SSplus{1,2}(t+\rho)
		& \text{and} &
		\widehat{\varphi}_{0,2}^+: \SSplus{0,2}(t) &{}\longrightarrow{}& \SSplus{0,2}(t+\rho)
		\\
		\vv &\longmapsto& \varphi_{1,2}(\vv) 
		& &
		\vv &\longmapsto& \varphi_{0,2}(\vv)
	\end{array}
	\]
	are bijections.  
\end{prop}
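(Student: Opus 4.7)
The plan is to leverage Theorem~\ref{t:big_phi_properties}, which already provides that $\varphi_{1,2}$ and $\varphi_{0,2}$ are bijections $\LL_t \to \LL_{t+\rho}$.  It therefore suffices to verify that each restricted map sends its prescribed domain into its prescribed codomain and that the inverse sends its codomain back into the domain; injectivity is then automatic.

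The forward direction is immediate from the explicit form of the maps.  Since $\varphi_{1,2}(\vv) = \vv + a(a+b)\ell(\vv)(\ee_1 - \ee_2)$ fixes the first coordinate, the defining constraint $v_0 < b$ of $\SSplus{1,2}(t)$ is preserved, and because $\ell(\vv) > 0$ by Proposition~\ref{p:ppnlengths}, the shift only increases the second coordinate and only decreases the third, so the image remains in $\PPN{t+\rho}$.  The parallel observation that $\varphi_{0,2}(\vv) = \vv + ab\ell(\vv)(\ee_0 - \ee_2)$ fixes the second coordinate handles the forward containment for $\widehat{\varphi}_{0,2}^+$.

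The main obstacle is the reverse containment.  For $\ww = (w_0, w_1, -w_2) \in \SSplus{1,2}(t+\rho)$ with $\ell = \ell(\ww)$, requiring $\varphi_{1,2}^{-1}(\ww) \in \PPN{t}$ amounts to $w_1, w_2 \geq a(a+b)\ell$.  Using equations~\eqref{eqn:1} and~\eqref{eqn:2} in $M_{t+\rho}$, these translate respectively to $t\ell \geq daw_0$ and $(t+db)\ell \geq d(a+b)w_0$.  The first follows because $t > B^+$ together with the standing hypothesis $t > da$ forces $t > a(b-1)$ (a short case split on $b \leq d+1$ versus $b \geq d+2$), so $t\ell \geq td > da(b-1) \geq daw_0$.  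The second is much tighter:\ in the extremal case $\ell = d, w_0 = b-1$, it rearranges to $t \geq (a+b)(b-1) - db = B^+ + b$, which exceeds $B^+ + 1$ by $b-1$.

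The rescue is an integrality argument.  Examining $bw_1 = (t+\rho+db) - (a+b)w_0$ modulo $b$ shows that the extremal case $\ell = d, w_0 = b-1$ can occur only when $t \equiv -a \pmod b$; a direct calculation also gives $B^+ \equiv -a \pmod b$, so the smallest admissible $t > B^+$ in this residue class is precisely $t = B^+ + b$, meeting the bound exactly.  The remaining subcases---$\ell = d$ with $w_0 \leq b-2$, or $\ell \geq 2d$---admit direct estimates with slack.  The argument for $\widehat{\varphi}_{0,2}^+$ follows the same template:\ the inverse condition becomes $w_0, w_2 \geq ab\ell$, with $w_2 \geq ab\ell$ immediate from $t > da$ via equation~\eqref{eqn:0}; the tight case $\ell = d, w_1 = a+b-1$ for $w_0 \geq ab\ell$ is rescued by the residue computation $B^+ \equiv -b(d+1) \pmod{a+b}$, forcing $t \geq B^+ + (a+b) = b(a+b-1-d)$, exactly the required bound.
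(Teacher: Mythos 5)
Your proof is correct in its essentials but handles the crux differently from the paper. Both arguments reduce surjectivity to the same two coordinate inequalities ($w_1, w_2 \ge a(a+b)\ell(\ww)$ for $\widehat\varphi^+_{1,2}$, and $w_0, w_2 \ge ab\ell(\ww)$ for $\widehat\varphi^+_{0,2}$) via equations~\eqref{eqn:0}--\eqref{eqn:2}, and in both the only genuinely tight situation is $\ell(\ww) = d$. The paper disposes of that case by invoking Proposition~\ref{p:lendexists}: since a length-$d$ trade $\al$ (resp.\ $\be$) with first coordinate $<b$ (resp.\ second coordinate $<a+b$) exists for $t > B^+$, and length-$d$ trades differ by multiples of $\hh$, any $\ww$ with $\ell(\ww)=d$ in the relevant strip must equal $\varphi_{1,2}(\al)$ (resp.\ $\varphi_{0,2}(\be)$), so it is automatically in the image; the inequality estimate is then only needed for $\ell(\ww) \ge 2d$. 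You instead keep everything inside the inequality framework and rescue the tight case by integrality: from $(a+b)w_0 + b w_1 = \ell(\ww)(t+\rho+db)/d$ you read off $t \equiv a w_0 \pmod b$ (resp.\ $t \equiv b(w_1 - d) \pmod{a+b}$) when $\ell(\ww)=d$, and since $B^+$ lies in the same residue class in the extremal case, $t > B^+$ is promoted to $t \ge B^+ + b$ (resp.\ $B^+ + (a+b)$), which is exactly what the estimate needs. Your route is self-contained --- it does not rely on $\al$, $\be$, or the Frobenius-number computation --- and it exposes the sharpness of $B^+$ directly inside this proposition; the paper's route is shorter here but leans on the machinery built around Proposition~\ref{p:lendexists}, which it needs anyway for Theorem~\ref{t:ppngraver}.

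One small repair is needed in your treatment of $\widehat\varphi^+_{0,2}$. For that map the subcases $\ell(\ww)=d$ with $w_1 \le a+b-2$ do \emph{not} all admit direct estimates: the requirement is $t \ge b(w_1 - d)$, and for $w_1 = a+b-2$ this is $t \ge B^+ + a$, which does not follow from $t > B^+$ when $a \ge 2$ (the symmetry with the $\widehat\varphi^+_{1,2}$ side, where $w_0 \le b-2$ really is slack, breaks because the two strips have different widths). Fortunately your own residue argument covers these cases uniformly: whenever $\ell(\ww)=d$ one has $t \equiv b(w_1-d) \pmod{a+b}$ and $b(w_1-d) \le b(a+b-1-d) = B^+ + (a+b)$, so $t < b(w_1-d)$ would force $b(w_1-d) \ge t + (a+b) > B^+ + (a+b)$, a contradiction. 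So state the congruence step for all $\ell(\ww)=d$ rather than only for $w_1 = a+b-1$, and the argument is complete.
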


\begin{proof}
	Both $\widehat\varphi_{1,2}^+$ and $\widehat\varphi_{0,2}^+$ are well-defined as the they leave untouched the first and second coordinates, respectively.  
	We have seen in Theorem~\ref{t:big_phi_properties} that each $\varphi_{i,j}$ is bijective, so injectivity of $\widehat\varphi_{1,2}^+$ and $\widehat\varphi_{0,2}^+$ is immediate.  As such, we need only verify surjectivity.  
	
	Fix $\ww \in \SSplus{1,2}(t+\rho)$.  We claim $\vv = (v_0, v_1, -v_2) = \ww - a(a+b)\ell(\ww)(\ee_1 - \ee_2) \in \SSplus{1,2}(t)$.  It is clear $\varphi_{1,2}(\vv) = \ww$, and $v_0 = w_0$ by definition, so we need only to ensure $v_1$ and $v_2$ have appropriate signs, i.e., that  $w_1, w_2 \ge a(a+b)\ell(\ww)$.
	
	To show $w_2 \ge a(a+b)\ell(\ww)$, we begin with representation ~(\ref{eqn:1}) of $\pi_{t+\rho}(\ww) = 0$. Since $w_0$ is an integer, $w \in \SSplus{1,2}$ implies $w_0 \le b - 1$. Using algebra and this bound, we obtain the inequality
	\[dbw_2 - \rho\ell(\ww) 
	= t\ell(\ww) - daw_0 
	\ge t\ell(\ww) - da(b-1).\]
	The left hand side factors to $db(w_2 - a(a+b)\ell(\ww))$ upon substituting the value of~$\rho$. By Lemma~\ref{l:basiclengthfacts}, $d\mid \ell(\ww)$, so we write $\ell(\ww) =nd$ for some positive integer $n$ by Proposition~\ref{p:ppnlengths}.  Using these substitutions, the previous centered inequality becomes
	\begin{equation}\label{eqn:branchpoint}
		b(w_2 - a(a+b)\ell(\ww)) \ge (nt - a(b-1)) \ge t - a(b-1).
	\end{equation}
	If $b - 1 \le d$, we the right hand side of~\eqref{eqn:branchpoint} is nonnegative since $t-da \ge 0$. Otherwise, $b-1 \ge d+1$, and we use $t > B+ = (b-1)(a+b) - (d+1)b$, applied to~\eqref{eqn:branchpoint}, obtaining
	\begin{align*}
		b(w_2 - a(a+b)\ell(\ww)) 
		&> (b-1)(a+b)-(d+1)b - a(b-1) = b(b-1) - (d+1)b \\
		&\ge b(d+1) - (d+1)b = 0,
	\end{align*}
	as desired.

	We now verify that $w_1 \ge a(a+b)\ell(\ww)$, we begin with representation ~(\ref{eqn:2}) of $\pi_{t+\rho}(\ww)$, using algebra along with Lemma~\ref{l:basiclengthfacts} to rewrite 
	\[
	dbw_1 - \rho\ell(\ww) 
	= (t+db)\ell(\ww) - d(a+b)w_0 
	\ge d\big[n(t+db) - \big((a+b)(b-1)\big)\big].
	\]
	If $\ell(\ww) = d$, we are ensured that $\ww = \varphi_{1,2}(\al)$, so we may assume $n \ge 2$. Factoring the left hand side, we consolidate the inequality, which yields 
	\begin{align*}
		b(w_1 - a(a+b)\ell(\ww)) 
		&\ge 2(t+db) - \big((a+b)(b-1)\big) \\
		&\ge 2\big((b-1)(a+b) - b+1\big) - (a+b)(b-1) \\
		&= (b-1)(a+b)- 2(b-1) = (b-1)(a+b-2) \ge 0,
	\end{align*}
	since $a+b \ge 2$ and $b \ge 1$. This completes the proof of surjectivity of $\widehat\varphi_{1,2}^+$.
	
 	To see that $\widehat{\varphi}_{0,2}^+$ is surjective, we similarly must verify that $w_0 \ge ab\ell(\ww)$ and that $w_2 \ge ab\ell(\ww)$. This can be easily checked using analagous algebra, using~\eqref{eqn:2} and~\eqref{eqn:0} and the bound $t + db \ge (b-1)(a+b - 1)$, completing the proof.
\end{proof}

Note that, in contrast to Section~\ref{sec:pnp}, the maps defined in Proposition~\ref{p:ppnmaps} need not agree on the intersections of their domains.  Indeed, when the intersection is nonempty, $\al = \be$ is the only trade residing therein, and $\widehat \varphi_{1,2}^+$ and $\widehat \varphi_{0,2}^+$ send $\al$ and $\be$ to distinct trades in $\PPN{t+\rho}$.

\begin{thm}\label{t:ppngraver}
	For $t > B^+$ and $\vv \in \SSplus{1,2}(t)$, the trade $\widehat{\varphi}_{1,2}^+(\vv) \in \SSplus{1,2}(t+\rho)$ is reducible if and only if $\vv \in \PPN{t}$ is reducible.  The map $\widehat \varphi_{0,2}^+$ likewise preserves reducibility.  Thus,
	\[
		\mathcal{H}(\PPN{t+\rho}) =  \bigcup_{j\in\{0,1\}}\widehat{\varphi}_{j,2}^+\left(\mathcal{H}(\PPN{t}) \cap \SSplus{j,2}(t)\right) \cup \{\varphi_{1,2}(\al), \varphi_{1,2}(\al) + \hh, \ldots, \varphi_{0,2}(\be))\}
	\]
	and in particular $|\mathcal{H}(\PPN{t+\rho})| = |\mathcal{H}(\PPN{t})| + da$.
\end{thm}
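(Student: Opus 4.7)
The plan follows the structural template of Theorem~\ref{t:pnpgraver} adapted to the piecewise setup here, combined with Theorem~\ref{t:ppnreducible} and a direct endpoint computation for the length-$d$ segment. For reducibility-preservation of $\widehat\varphi_{1,2}^+$: suppose $\vv \in \SSplus{1,2}(t)$ admits a nontrivial conformal decomposition $\vv = \uu + \ww$ with $\uu, \ww \in \PPN{t}$. Since $u_0 + w_0 = v_0 < b$ with $u_0, w_0 \ge 0$, both summands satisfy the strict inequality, placing $\uu, \ww \in \SSplus{1,2}(t)$. Linearity of $\varphi_{1,2}$ then gives
\[
\widehat\varphi_{1,2}^+(\vv) = \widehat\varphi_{1,2}^+(\uu) + \widehat\varphi_{1,2}^+(\ww),
\]
a nontrivial conformal decomposition in $\PPN{t+\rho}$. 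For the converse, any conformal decomposition of $\widehat\varphi_{1,2}^+(\vv) \in \SSplus{1,2}(t+\rho)$ has both summands in $\SSplus{1,2}(t+\rho)$ by the same argument, and Proposition~\ref{p:ppnmaps} pulls it back uniquely to a nontrivial decomposition of $\vv$ in $\SSplus{1,2}(t)$ via injectivity of $\varphi_{1,2}$. The statement for $\widehat\varphi_{0,2}^+$ is handled symmetrically using the second coordinate.

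Next, I would establish the set equality. By Theorem~\ref{t:ppnreducible} together with Proposition~\ref{p:ppnlengths}, every element of $\mathcal H(\PPN{t+\rho})$ either has length $d$ or lies in $\SSplus{1,2}(t+\rho) \cup \SSplus{0,2}(t+\rho)$. The irreducibles in $\SSplus{1,2}(t+\rho)$ and $\SSplus{0,2}(t+\rho)$ coincide with $\widehat\varphi_{1,2}^+(\mathcal H(\PPN{t}) \cap \SSplus{1,2}(t))$ and $\widehat\varphi_{0,2}^+(\mathcal H(\PPN{t}) \cap \SSplus{0,2}(t))$ by the preceding paragraph and Proposition~\ref{p:ppnmaps}. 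For the length-$d$ piece, a direct calculation shows $\varphi_{1,2}(\al)$ has first coordinate $\alpha_0 < b$ and length $d$; since all length-$d$ trades in $\PPN{t+\rho}$ differ by multiples of $\hh$, and hence have first coordinates forming a single arithmetic progression with step $b$, uniqueness of the trade whose first coordinate lies in $[0, b)$ forces $\varphi_{1,2}(\al)$ to be the length-$d$ endpoint with minimal first coordinate in $\PPN{t+\rho}$. A symmetric computation identifies $\varphi_{0,2}(\be)$ as the length-$d$ endpoint with minimal second coordinate, so the length-$d$ segment in $\PPN{t+\rho}$ is exactly the listed $\hh$-progression.

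For the cardinality formula, substituting $\be = \al + q\hh$ into the explicit expressions $\varphi_{1,2}(\al) = (\alpha_0, \alpha_1 + ad(a+b), -\alpha_2 - ad(a+b))$ and $\varphi_{0,2}(\be) = (\beta_0 + abd, \beta_1, -\beta_2 - abd)$ yields $\varphi_{0,2}(\be) = \varphi_{1,2}(\al) + (q + ad)\hh$, so the length-$d$ progression in $\PPN{t+\rho}$ has $q + ad + 1$ terms versus $q + 1$ in $\PPN{t}$. Combining the bijection cardinalities on the $\SSplus{1,2}$ and $\SSplus{0,2}$ pieces with this net increase of $ad$ in the length-$d$ segment delivers the formula. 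The main obstacle is the inclusion-exclusion bookkeeping at the endpoints of the length-$d$ segment, since $\al$ (resp. $\be$) lies in both the bijection image and the length-$d$ piece; the edge case $\al = \be$ (where $\al$ is double-counted in $\SSplus{1,2}(t) \cap \SSplus{0,2}(t)$ while $\varphi_{1,2}(\al) \neq \varphi_{0,2}(\be)$ always holds in $\PPN{t+\rho}$ since $ad \ge 1$) requires separate verification, but in both cases the over- and under-counts cancel to give exactly $ad$.
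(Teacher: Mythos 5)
Your proposal follows essentially the same route as the paper: reducibility-preservation by the conformal decomposition argument of Theorem~\ref{t:pnpgraver}, combined with Theorem~\ref{t:ppnreducible} and Proposition~\ref{p:ppnlengths} to reduce to the two slabs plus the length-$d$ segment, and the explicit computation
\[
\varphi_{0,2}(\be) - \varphi_{1,2}(\al) = (q + da)\hh
\]
to get the segment-length increase. Your expressions $\varphi_{1,2}(\al) = (\alpha_0, \alpha_1 + ad(a+b), -\alpha_2 - ad(a+b))$ and $\varphi_{0,2}(\be) = (\beta_0 + abd, \beta_1, -\beta_2 - abd)$ are correct, and your identification of $\varphi_{1,2}(\al)$, $\varphi_{0,2}(\be)$ as the extremal length-$d$ trades via the arithmetic-progression argument on first and second coordinates matches the paper's ``preserve length and fix extremal entries'' observation. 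You flag the endpoint inclusion-exclusion and the case $\al = \be$ as requiring verification; the paper handles this by the remark (just before Theorem~\ref{t:ppngraver}) that $\SSplus{1,2}(t) \cap \SSplus{0,2}(t)$ is nonempty only when $\al = \be$ and is guaranteed empty after any positive shift, so the over- and under-counts do cancel as you assert. Essentially the same proof.
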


\begin{proof}
	The bijections $\widehat \varphi_{1,2}^+$ and $\widehat \varphi_{0,2}^+$ established in Proposition~\ref{p:ppnmaps} preserve irreducibility by a similar argument to the one given in Theorem~\ref{t:pnpgraver}, and thus restrict to bijections $\mathcal{H}(\PPN{t}) \cap \SSplus{1,2}(t) \to \mathcal{H}(\PPN{t+\rho}) \cap \SSplus{1,2}(t+\rho)$ and $\mathcal{H}(\PPN{t}) \cap \SSplus{1,2}(t) \to \mathcal{H}(\PPN{t+\rho}) \cap \SSplus{1,2}(t+\rho)$, respectively. 
	We now need only consider the irreducible trades that lie outside $\PPNhat{t}$.  By~Proposition~\ref{p:ppnlengths}, every such trade has length $d$.  Since $\varphi_{1,2}$ and $\varphi_{0,2}$ preserve length and fix extremal entries, we claim
	\[
	da = \big\vert\{\varphi_{1,2}(\al), \varphi_{1,2}(\al) +\hh,\ldots,\varphi_{1,2}(\be) - \hh,\varphi_{1,2}(\be)\}\big\vert - \big\vert\{\al, \al +\hh,\ldots,\be - \hh,\be\}\big\vert
	\]
	By construction, $\al$ and $\be$ are the extremal length $d$ trades, and $\be - \al = k\hh$ for some $k \in \ZZ_{\ge 0}$.  
	To complete the proof, we need only observe 
	\[
	\varphi_{0,2}(\be) - \varphi_{1,2}(\al) = \be - \al + dab(\ee_0 - \ee_2) - da(a+b)(\ee_1 - \ee_2) =(k + da)\hh.
	\]
	All together, we obtain $|\mathcal{H}(\PPN{t+\rho})| - |\mathcal{H}(\PPN{t})| = (k + da - k) = da$, as desired.
\end{proof}

\section{Constructing the Graver basis}
\label{sec:graverbasis}

The results in this previous section naturally extend via a symmetric argument to the following result for the orthant $\NPP{t}$.  We record the results here, omitting the proof.

\begin{thm}\label{t:nppgraver}
	Let
	\[
	\SSminus{0,2}(t) = \{(-v_0, v_1, v_2) \in \NPP{t} \mid v_2 < a\}
	\quad \text{and} \quad
	\SSminus{0,1}(t) = \{(-v_0, v_1, v_2)  \in \NPP{t} \mid v_1 < a+b\}.
	\]
	For any $t > B^-$, the maps 
	\[
	\begin{array}{r@{}c@{}l@{\qquad}l@{\qquad}r@{}c@{}l}
		\widehat{\varphi}_{0,2}^-: \SSminus{0,2}(t) &{}\longrightarrow{}& \SSminus{0,2}(t+\rho)
		& \text{and} &
		\widehat{\varphi}_{0,1}^-: \SSminus{0,1}(t) &{}\longrightarrow{}& \SSminus{0,1}(t+\rho)
		\\
		\vv &\longmapsto& \varphi_{0,2}(\vv) 
		& &
		\vv &\longmapsto& \varphi_{0,1}(\vv)
	\end{array}
	\]
	are well-defined bijections that preserve irreducibility. 
	Furthermore, there exist trades $\boldsymbol{\xi} \in \SSminus{0,1}(t)$ and $\boldsymbol{\omega} \in \SSminus{0,2}(t)$ with $\xi_2 < a$ and $\omega_1 < a + b$ minimal among those with coordinate sum $\ell(\boldsymbol{\xi}) = \ell(\boldsymbol{\omega}) = -d$.
	As such,
	\[
	\mathcal{H}(\NPP{t+\rho}) =  \bigcup_{j\in\{1,2\}}\widehat{\varphi}_{0,j}^-\left(\mathcal{H}(\NPP{t}) \cap \SSminus{0,j}(t)\right)
	\cup \{\varphi_{0,1}(\boldsymbol{\xi}), \varphi_{0,1}(\boldsymbol{\xi}) + \hh, \ldots, \varphi_{0,2}(\boldsymbol{\omega}))\}
	\]
and in particular,
	\[\abs{\mathcal{H}(\NPP{t+\rho})} = \abs{\mathcal{H}(\NPP{t})} + db.\]
\end{thm}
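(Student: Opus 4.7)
The plan is to mirror the structure of Section~\ref{sec:ppn} with the roles of coordinates $0$ and $2$ swapped and the parameters $a$ and $b$ exchanged. I would not attempt to deduce Theorem~\ref{t:nppgraver} from Theorem~\ref{t:ppngraver} by negation, since negating a trade in $\PPN{t}$ yields a trade of shape $(-v_0,-v_1,v_2)$ rather than one in $\NPP{t}$; instead, I would reprove each of Propositions~\ref{p:ppnlengths}, \ref{p:lendexists}, \ref{p:ppnmaps}, and Theorems~\ref{t:ppnreducible} and~\ref{t:ppngraver}, adjusting the inequalities for the new orthant.

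The first step establishes the sign of length: from equation~\eqref{eqn:0} we obtain $(t-da)\ell(\vv) = -dav_1 - d(a+b)v_2 \le 0$ for any $\vv = (-v_0,v_1,v_2) \in \NPP{t}$, with equality only for multiples of $\hh$, which lie outside $\NPP{t}$ by Lemma~\ref{l:basiclengthfacts}. Hence $\ell(\vv) < 0$, and every reducible trade has $\ell(\vv) \le -2d$. Next, mirroring Proposition~\ref{p:lendexists} with the auxiliary semigroup $\<a,a+b\>$ in place of $\<b,a+b\>$, for $t > B^-$ I would solve $av_1 + (a+b)v_2 = t - da$ in nonnegative integers and set $v_0 = v_1 + v_2 + d$ to produce a length-$-d$ trade $(-v_0,v_1,v_2) \in \NPP{t}$; applying the division algorithm and shifts by $\hh$ then isolates the two extremal length-$-d$ trades $\boldsymbol{\xi}$ and $\boldsymbol{\omega}$ described in the statement, which lie at the endpoints of a common $\hh$-line segment. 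The bijections $\widehat{\varphi}_{0,2}^-$ and $\widehat{\varphi}_{0,1}^-$ would then be verified as in Proposition~\ref{p:ppnmaps}, using equations~\eqref{eqn:0}, \eqref{eqn:1}, \eqref{eqn:2} to check that preimages land back in $\NPP{t}$.

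The main obstacle is the analog of Theorem~\ref{t:ppnreducible}: showing that any $\vv = (-v_0,v_1,v_2) \in \NPP{t}$ lying outside $\SSminus{0,2}(t) \cup \SSminus{0,1}(t)$ with $\ell(\vv) \le -2d$ is reducible. Following the $\PPN{t}$ strategy, I would attempt to write $\vv = (\vv-\boldsymbol{\xi}) + \boldsymbol{\xi}$, and if the remainder leaves $\NPP{t}$, shift $\boldsymbol{\xi}$ by up to $k$ multiples of $\hh$ (where $k$ is bounded by the $\hh$-distance from $\boldsymbol{\xi}$ to $\boldsymbol{\omega}$) and verify through three chained inequalities---one per coordinate---that the remainder eventually returns to $\NPP{t}$. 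The bound $t > B^-$ enters precisely in closing the final of these inequalities, mirroring the role of $t > B^+$ in the $\PPN{t}$ proof. Once reducibility is in hand, piecewise linearity of the two bijections carries irreducibles to irreducibles, and the count of new length-$-d$ trades is determined by the identity
\[
	\varphi_{0,2}(\boldsymbol{\omega}) - \varphi_{0,1}(\boldsymbol{\xi}) = \boldsymbol{\omega} - \boldsymbol{\xi} + db\hh,
\]
yielding $db$ additional length-$-d$ trades in $\mathcal{H}(\NPP{t+\rho})$ beyond the image of $\mathcal{H}(\NPP{t})$, and thus $|\mathcal{H}(\NPP{t+\rho})| = |\mathcal{H}(\NPP{t})| + db$ as claimed.
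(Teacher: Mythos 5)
Your proposal follows essentially the same route the paper intends: the paper omits the proof of Theorem~\ref{t:nppgraver}, declaring it the symmetric analog of Section~\ref{sec:ppn}, and your plan of re-proving the analogs of Propositions~\ref{p:ppnlengths}, \ref{p:lendexists}, \ref{p:ppnmaps} and Theorems~\ref{t:ppnreducible}, \ref{t:ppngraver} with coordinates $0$ and $2$ swapped and $a \leftrightarrow b$ exchanged (including the correct identity $\varphi_{0,2}(\boldsymbol{\omega}) - \varphi_{0,1}(\boldsymbol{\xi}) = \boldsymbol{\omega} - \boldsymbol{\xi} + db\,\hh$, which yields the count $+db$) is exactly that argument. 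The only caveat, inherited from the paper rather than introduced by you, is the lower bound: your Frobenius step requires $t - da > \mathcal{F}(\<a,a+b\>)$, i.e.\ $t > (a-1)(a+b) + a(d-1)$, so for $d \ge 2$ the paper's printed $B^- = (a-1)(a+b) - a(d-1)$ appears to carry a sign slip on the $a(d-1)$ term, and you should state the bound you actually use.
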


We are now ready to prove Theorem~\ref{t:graversize}.  

\begin{proof}[Proof of Theorem~\ref{t:graversize}]
	By definition, $Gr(M_{t+\rho})$ is the union of the Hilbert bases of $\PPN{t}$, $\PNP{t}$, and $\NPP{t}$ and their negations, any two of which have disjoint interiors.  Their intersections are thus contained in the coordinate planes, and each contain at most one of the one trades with a zero entry.  
	As such, 
	\[
		|Gr(M_t)| = 2\bigg[|\mathcal H(\PNP{t})| + |\mathcal H(\PPN{t})| + |\mathcal H(\NPP{t})| - 3\bigg]
	\]
	and thus for $t > B$, 
	\[
	|Gr(M_{t+\rho})| - |Gr(M_{t})| = \sum_{s\in\{+,\pm,-\}} 2\left[|\mathcal{O}^s_{t+\rho}| - 2|\mathcal{O}^s_{t}|\right] = 2d(a+b)
	\]
	by Theorems~\ref{t:pnpgraver}, \ref{t:ppngraver}, and~\ref{t:nppgraver}, as desired.
\end{proof}


\begin{table}[t]
	\begin{center}
		$\scriptstyle
		\begin{array}{@{}c | c | c | c@{}}
			& \mathcal S_{1,2}(19) & \mathcal S_{0,2}(19) & \mathcal S_{0,1}(19) \\
			\hline
			\mathcal H(\PPN{19}) &
			\begin{blockarray}{rrr}
				&& \color{blue}{\al} \\
				\begin{block}{(rrr)}
					0 & 1 & \color{blue}{\phantom{-0}2} \\
					22 & 13 & \color{blue}{4}\\
					-19 & -12 & \color{blue}{-5} \\
				\end{block}
			\end{blockarray}
			&
			\begin{blockarray}{rrrrr}
				\color{blue}{\be} &&&& \\
				\begin{block}{(rrrrr)}
					\color{blue}{\phantom{-0}2} & \phantom{-0}7 & 12 & 17 & 22 \\
					\color{blue}{4} &3 & 2 & 1 & 0 \\
					\color{blue}{-5} & -8 & -11 & -14 & -17 \\
				\end{block}
			\end{blockarray}
			& \mathbf{0}
			\\[-1em]
			\mathcal H (\PNP{19}) & 
			\begin{blockarray}{(rr)}
				0 & 1 \\
				-22 & -9 \\
				19 & 7 \\
			\end{blockarray}
			& 
			\begin{blockarray}{r}
				\hh \\
				\begin{block}{(r)}
					3 \\
					-5 \\
					2 \\
				\end{block}
			\end{blockarray}
			&  
			\begin{blockarray}{(rr)}
				11 & 19 \\
				-11 & -17 \\
				1 & 0
			\end{blockarray}
			\\[-1.3em]
			\mathcal H(\NPP{19})  & 
			\mathbf{0}&
			\begin{blockarray}{rr}
				& \color{red}{\boldsymbol{\omega}} \\
				\begin{block}{(rr)}
					-22 &\color{red}{-5} \\
					0 & \color{red}{1} \\
					17 & \color{red}{3} \\
				\end{block}
			\end{blockarray}
			&
			\begin{blockarray}{rr}
				\color{red}{\boldsymbol{\xi}} & \\
				\begin{block}{(rr)}
					\color{red}{-8} & -19  \\
					\color{red}{6} & 17 \\
					\color{red}{1} & 0 \\
				\end{block}
			\end{blockarray}
		\end{array}$
	\end{center}
	
	\medskip
	\begin{center}
		$\scriptstyle
		\begin{array}{@{}c | c | c | c@{}}
			& \mathcal S_{1,2}(79) & \mathcal S_{0,2}(79) & \mathcal S_{0,1}(79) \\
			\hline
			\mathcal H(\PPN{79}) 
			&
			\begin{blockarray}{rrr}
				&& \color{blue}{\al^*} \\
				\begin{block}{(rrr)}
					0 & 1 & \color{blue}{2} \\
					82 & 53 & \color{blue}{24} \\
					-79 & -52 & \color{blue}{-25} \\
				\end{block}
			\end{blockarray}
			&
			\begin{blockarray}{rrrrr}
				\color{blue}{\be^*} &&&& \\
				\begin{block}{(rrrrr)}
					\color{blue}{14} & 31 & 48 & 65 & 82 \\
					\color{blue}{4} &3 & 2 & 1 & 0 \\
					\color{blue}{-17} &-32 & -47 & -62 & -77 \\
				\end{block}
			\end{blockarray}
			&
			\mathbf{0}\\[-1em]
			\mathcal H (\PNP{79})
			&
			\begin{blockarray}{(rr)}
				0 & 1 \\
				-82 & -29  \\
				79 & 27 
			\end{blockarray}
			& 
			\begin{blockarray}{r}
				\hh \\
				\begin{block}{(r)}
					3 \\
					-5 \\
					2 \\
				\end{block}
			\end{blockarray}
			&  
			\begin{blockarray}{(rr)}
				41 & 79 \\
				-41 & -77 \\
				1 & 0
			\end{blockarray}
			\\[-1.3em]
			\mathcal H(\NPP{79}) 
			& 
			\mathbf{0}&
			\begin{blockarray}{rr}
				& \color{red}{\boldsymbol{\omega}^*} \\
				\begin{block}{(rr)}
					-82 &\color{red}{-17} \\
					0 & \color{red}{1} \\
					77 & \color{red}{15} \\
				\end{block}
			\end{blockarray}
			&
			\begin{blockarray}{rr}
				\color{red}{\boldsymbol{\xi}^*} & \\
				\begin{block}{(rr)}
					\color{red}{-38} & -79  \\
					\color{red}{36} & 77 \\
					\color{red}{1} & 0 \\
				\end{block}
			\end{blockarray}
		\end{array}$
	\end{center}
	\caption{The trades in $Gr(M_{19})$ (top) and $Gr(M_{79})$ (bottom) within one $\hh$-heighborhood of a coordinate plane.  
	}
	\label{tb:mainexample}
\end{table}

\begin{example}\label{e:mainexample}
	Consider the semigroup $M = \<77,79,82\>$.  The middle generator gives $t = 79$, the differences between successive generators yield $\rr = (-2,0,3)$, i.e. $a = 2$ and $b = 3$, and $d = 1$ since $\gcd(a,b) = 1$, so we have $M = M_{79}$.  To ensure we may apply Theorem~\ref{t:graversize}, we verify $79 > B = \max\{B^+, B^{\pm}, B^-\} = \max\{6,9,10\} = 10$.
	After calculating $\rho = 30$, we apply the division algorithm to obtain $t = 2\rho + 19$ with $19 > B$.  
	
	To construct $Gr(M_{79})$, we begin by computing $Gr(M_{19})$.  The software package \texttt{4ti2}~\cite{4ti2} outputs the matrix 
	\[
		Gr(M_{19}) = 
		\scriptstyle
		\left(\begin{array}{rrrrrrrrrrrrr}
			-19 & 11 & -8 & \color{red}{3} & -5 & -2 & 1 & -7 & -12 & -1 & -17 & -22 & 0 \\
			17 & -11 & 6 & \color{red}{-5}& 1 & -4 & -9 & -3 & -2 & -13 & -1 & 0 & -22 \\
			0 & 1 & 1 & \color{red}{2} & 3 & 5 & 7 & 8 & 11 & 12 & 14 & 17 & 19
		\end{array}\right)
	\]
	whose columns and their negatives are the 26 trades in $Gr(M_{19})$.  Indicated in red is the trade $\hh = (3,-5,2)$.  
	Partitioning $Gr(M_{19})$ as dictated by Theorems~\ref{t:pnpgraver}, \ref{t:ppngraver}, and~\ref{t:nppgraver} yields the breakdown in Table~\ref{tb:mainexample}, we proceed to construct $Gr(M_{79})$ by applying the appropriate choice of $\varphi_{i,j}$ twice to trades in each column.

	To complete the construction, we add multiples of $\hh = (3,-5,2)$ to $\al^* = \varphi_{1,2}^2(\al)$ and $\boldsymbol{\xi}^* = \varphi_{0,1}^2(\boldsymbol{\xi})$ until arriving at $\be^* = \varphi_{0,2}^2(\be)$ and $\boldsymbol{\omega}^* = \varphi_{0,2}^2(\boldsymbol{\omega})$, respectively, obtaining the columns in the matrices
	\[
		\begin{blockarray}{rrrrr}
		\al^* &&&& \be^* \\
		\begin{block}{r(rrr)r}
		 2 & 5 & 8 & 11 & 14 \\
		 24 & 19 & 14 & 9 & 4 \\
		 -25 & -23 & -21 & -19 & -17 \\
		\end{block}
		\end{blockarray}
		\quad \text{and} \quad
		\begin{blockarray}{rrrrrrrr}
		\boldsymbol{\xi}^* &&&&&&& \boldsymbol{\omega}^* \\
		\begin{block}{r(rrrrrr)r}
		-38 & -35 & -32 & -29 & -26 & -23 & -20 & -17 \\
		36 & 31 & 26 & 21 & 16 & 11 & 6 & 1 \\
		1 & 3 & 5 & 7 & 9 & 11 & 13 & 15 \\
		\end{block}
		\end{blockarray}
	\]
	Indeed, one may verify with \texttt{4ti2} that $Gr(M_{79})$ consists of the 23 trades 
	\[\left(\begin{array}{rrrrrrrrrrrr}
		-79 & 41 & -38 & 3 & -35 & -32 & -29 & -26 & -23 & -20 & -17 & -14 \\
		77 & -41 & 36 & -5 & 31 & 26 & 21 & 16 & 11 & 6 & 1 & -4 \\
		0 & 1 & 1 & 2 & 3 & 5 & 7 & 9 & 11 & 13 & 15 & 17
	\end{array}\right.\]
	\[
	\left.\begin{array}{rrrrrrrrrrr}
		-11 & -8 & -5 & -2 & 1 & -31 & -48 & -1 & -65 & -82 & 0 \\
		-9 & -14 & -19 & -24 & -29 & -3 & -2 & -53 & -1 & 0 & -82 \\
		19 & 21 & 23 & 25 & 27 & 32 & 47 & 52 & 62 & 77 & 79
	\end{array}\right)
	\]
	and their negatives. 

	We close by presenting a large example within the same family. Consider
	\[
	M_{94159} = \<94157,94159,94162\>,
	\]
	with the same parameters $a = 2$, $b = 3$, and $d = 1$. The division algorithm yields $94159 = 3138\rho + 19$.  
	Having already obtained $Gr(M_{19})$ above, we see the vast majority of $Gr(M_{94159})$ consist of the 6277 trades
	\[\al^* = (2,31384,-31385), (5,31379,-31383), \ldots, (18830,4,-18833) = \be^*\]
	all of whom differ by precisely $\hh$, as well as the 9416 trades
	\[\boldsymbol{\xi}^* = (-47078, 47076,1), (-47075, 47071,3), \ldots, (-18833,1,18831) = \boldsymbol{\omega}^*.\]
	On the other hand, $\mathcal H(\PNP{94159})$ has unchanged cardinality, consisting solely of the trades  
	\[\left(\begin{array}{rrrrr}
		0 & 1 & 3 & 47081 & 94159 \\
		-94162 & -31389 & -5 & -47081 & -94157 \\
		94159 & 31387 & 2 & 1 & 0
	\end{array}\right).\]
\end{example}


\section{Acknowledgements}


The software packages \texttt{4ti2}~\cite{4ti2}, \texttt{Normaliz}~\cite{normaliz3}, and \texttt{GAP} package \texttt{numericalsgps}~\cite{numericalsgps} provided invaluable assistance throughout this project.


\end{document}